\date{}
\title{\vspace{-0.7cm}On the grid Ramsey problem and related questions}
\author{
David Conlon \thanks{Mathematical Institute, Oxford OX2 6GG,
United Kingdom. Email: {\tt david.conlon@maths.ox.ac.uk}. Research
supported by a Royal Society University Research Fellowship.}
\and
Jacob Fox \thanks{Department of Mathematics, MIT, Cambridge,
MA 02139-4307. Email: {\tt fox@math.mit.edu}. Research supported by
a Packard Fellowship, by a Simons Fellowship, by NSF grant DMS-1069197, by an Alfred P. Sloan Fellowship  and by an MIT NEC Corporation Award.}
\and
Choongbum Lee \thanks{Department of Mathematics,
MIT, Cambridge, MA 02139-4307. Email: {\tt cb\_lee@math.mit.edu}.
Research supported in part by NSF Grant DMS-1362326.}
\and
Benny Sudakov \thanks{Department of Mathematics, ETH, 8092 Zurich.
Email: {\tt benjamin.sudakov@math.ethz.ch}. 
Research supported in part by SNSF grant 200021-149111 and
by a USA-Israel BSF grant.}
}
\theoremstyle{plain}
\newtheorem{THM}{Theorem}[section]
\newtheorem{PROP}[THM]{Proposition}
\newtheorem{LEMMA}[THM]{Lemma}
\newtheorem{COR}[THM]{Corollary}
\newtheorem{CLAIM}[THM]{Claim}
\newtheorem{QUES}[THM]{Question}
\newtheorem{PROB}[THM]{Problem}
\theoremstyle{definition}
\renewenvironment{proof}
      {\medskip\noindent{\bf Proof.}\hspace{1mm}}
      {\hfill$\Box$\medskip}
\newenvironment{proofof}[1]
      {\medskip\noindent{\bf Proof of #1.}\hspace{1mm}}
      {\hfill$\Box$\medskip}
\begin{document}
\maketitle

\begin{abstract}
The Hales--Jewett theorem is one of the pillars of Ramsey theory, from which many other results follow.
A celebrated theorem of Shelah says that Hales--Jewett numbers are primitive recursive. A key tool used in his proof, now known as the cube lemma, has become famous in its own right. In its simplest form, this lemma says that if we color the edges of the Cartesian product $K_n \times K_n$ in $r$ colors then, for $n$ sufficiently large, there is a rectangle with both pairs of opposite edges receiving the same color. Shelah's proof shows that $n = r^{\binom{r+1}{2}} + 1$ suffices. More than twenty years ago, Graham, Rothschild and Spencer asked whether this bound can be improved to a polynomial in $r$. We show that this is not possible by providing a superpolynomial lower bound in $r$. We also discuss a number of related problems.
\end{abstract}

\section{Introduction}
\label{sec:intro}

Ramsey theory refers to a large body of deep results in mathematics whose underlying philosophy is captured succinctly by the statement that ``Every large system contains a large well-organized subsystem.''  This is an area in which a great variety of techniques from many branches of mathematics are used and whose results are important not only to combinatorics but also to logic, analysis, number theory and geometry. 

One of the pillars of Ramsey theory, from which many other results follow, is the Hales--Jewett theorem \cite{HJ63}. This theorem may be thought of as a statement about multidimensional multiplayer tic-tac-toe, saying that in a high enough dimension one of the players must win. However, this fails to capture the importance of the result, which easily implies van der Waerden's theorem on arithmetic progressions in colorings of the integers and its multidimensional generalizations. To quote \cite{GrRoSp}, ``The Hales--Jewett theorem strips van der Waerden's theorem of its unessential elements and reveals the heart of Ramsey theory. It provides a focal point from which many results can be derived and acts as a cornerstone for much of the more advanced work."

To state the Hales--Jewett theorem formally requires some notation. Let $[m]$ be the set of integers $\{1, 2, \dots, m\}$. We will refer to elements of $[m]^n$ as points or words and the set $[m]$ as the alphabet. For any $a \in [m]^n$, any $x \in [m]$ and any set $\gamma \subset [n]$, let $a\oplus x\gamma$ be the point of $[m]^n$ whose $j$-th component is $a_j$ if $j \not\in \gamma$ and $x$ if $j \in \gamma$. A combinatorial line is a subset of $[m]^n$ of the form $\{a \oplus x\gamma: 1 \leq x \leq m\}$. The Hales--Jewett theorem is now as follows.

\begin{THM}
For any $m$ and $r$ there exists an $n$ such that any $r$-coloring of the elements of $[m]^n$ contains a monochromatic combinatorial line.
\end{THM}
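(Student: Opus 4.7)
The plan is to prove the theorem by induction on the alphabet size $m$. The base case $m=1$ is trivial since any single point in $[1]^n$ is already a combinatorial line. Assuming the statement for $[m-1]$ with any number of colors, I prove it for $[m]$ with $r$ colors by introducing the auxiliary notion of \emph{color-focused lines}: a family of $k$ combinatorial lines $L_i = \{a_i \oplus x\gamma_i : x \in [m]\}$ in $[m]^n$ is color-focused at a point $p$ if each $m$-endpoint $a_i \oplus m\gamma_i$ equals $p$, each reduced line $\{a_i \oplus x\gamma_i : x \in [m-1]\}$ is monochromatic, and the $k$ colors so obtained are pairwise distinct.

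The main work will be a nested induction on $k$ showing that there exists a dimension $N(m,r,k)$ such that every $r$-coloring of $[m]^{N(m,r,k)}$ contains either a monochromatic combinatorial line or a family of $k$ color-focused lines. Pushing this up to $k = r$ then finishes the proof, because the reduced lines realize all $r$ colors, so the common focus must share its color with one of them and completes that line to a monochromatic one. The inner base case $k=0$ is vacuous, and the inductive step from $k-1$ to $k$ is where the outer hypothesis enters: I set $t = N(m,r,k-1)$ and let $s$ be the Hales--Jewett number for $[m-1]$ with $r^{m^t}$ colors, which exists by the outer hypothesis.

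Given an $r$-coloring $\chi$ of $[m]^{t+s} = [m]^t \times [m]^s$, I define an auxiliary coloring $\chi'$ on $[m-1]^s$ by letting $\chi'(y)$ record the entire coloring of the slice $[m]^t \times \{y\}$; there are at most $r^{m^t}$ such slice colorings. The outer hypothesis applied to $\chi'$ yields a combinatorial line $\{b \oplus x\delta : x \in [m-1]\}$ in $[m-1]^s$ along which every slice of $\chi$ carries the same restriction to $[m]^t$. Applying the inner hypothesis to this shared slice coloring then gives either a monochromatic line in $[m]^t$ (and hence in $[m]^{t+s}$), or $k-1$ color-focused lines with focus $p \in [m]^t$ and colors $c_1, \dots, c_{k-1}$. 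Lift these to the full cube by
\[
L_i = \{(a_i \oplus x\gamma_i,\, b \oplus x\delta) : x \in [m]\} \quad (1 \le i \le k-1), \qquad L_k = \{(p,\, b \oplus x\delta) : x \in [m]\},
\]
all sharing the focus $(p,\, b \oplus m\delta)$. For each $x \in [m-1]$, the $[m]^s$-coordinate of every point on these lines lies on the good line inside $[m-1]^s$, so the slice coloring there matches the shared one; hence $L_i$ retains color $c_i$ while $L_k$ takes on the color of $p$ in the shared slice. If this color equals some $c_j$ then the $j$-th slice line was already monochromatic, and otherwise the $k$ lifted lines form the required color-focused family.

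The main obstacle will be the coordinate coupling in the lifting step: to guarantee that all $k$ lifted lines share a single focus while their reduced points remain inside slices with identical coloring, the new line $L_k$ must vary along the very same coordinates $\delta$ prepared by the outer Hales--Jewett application and must place the old focus $p$ at its fixed $[m]^t$-component rather than at an arbitrary point. With this arrangement in place, the color bookkeeping reduces to a routine check, and iterating the construction drives $N(m,r,k)$ through a tower-type recursion in $m$ and $k$.
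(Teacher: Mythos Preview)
The paper does not actually prove this theorem: it is stated as background, attributed to Hales and Jewett \cite{HJ63}, and the paragraph following it only \emph{describes} the original proof as ``a double induction, where we prove the theorem for alphabet $[m+1]$ and $r$ colors by using the statement for alphabet $[m]$ and a much larger number of colors,'' before moving on to discuss Shelah's later improvement.

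Your argument is a correct and complete write-up of precisely that classical double induction. The outer induction on $m$, the inner induction on the number $k$ of color-focused lines, the product decomposition $[m]^{t+s} \cong [m]^t \times [m]^s$, the auxiliary $r^{m^t}$-coloring of $[m-1]^s$ by slice type, and the lifting with coupled wildcard set $\gamma_i \cup \delta$ are all standard and correctly executed. In particular, the point you flag as the ``main obstacle''---that the new line $L_k$ must vary only along $\delta$ and keep $p$ fixed in its $[m]^t$-component so that all $k$ lines share the focus $(p,\,b\oplus m\delta)$---is handled properly, and your case split on whether the color of $p$ in the shared slice coincides with some $c_j$ is exactly right. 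So there is nothing to compare against in the paper beyond its one-sentence summary, and your proof matches that summary.
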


The original proof of the Hales--Jewett theorem is similar to that of van der Waerden's theorem, using a double induction, where we prove the theorem for alphabet $[m+1]$ and $r$ colors by using the statement for alphabet $[m]$ and a much larger number of colors. This results in bounds of Ackermann type for the dependence of the dimension $n$ on the size of the alphabet $m$. In the late eighties, Shelah \cite{Shelah} made a major breakthrough by finding a way to avoid the double induction and prove the theorem with primitive recursive bounds. This also resulted in the first primitive recursive bounds for van der Waerden's theorem (since drastically improved by Gowers \cite{G01}). 

Shelah's proof relied in a crucial way on a lemma now known as the Shelah cube lemma. The simplest case of this lemma says that if we color the edges of the Cartesian product $K_n \times K_n$ in $r$ colors then, for $n$ sufficiently large, there is a rectangle with both pairs of opposite edges receiving the same color. Shelah's proof shows that we may take $n \leq r^{\binom{r+1}{2}} + 1$. In the second edition of their book on Ramsey theory \cite{GrRoSp}, Graham, Rothschild and Spencer asked whether this bound can be improved to a polynomial in $r$. Such an improvement, if it could be generalized, would allow one to improve Shelah's wowzer-type upper bound for the Hales--Jewett theorem to a tower-type bound. The main result of this paper, Theorem~\ref{thm:grid_main} below, answers this question in the negative by providing a superpolynomial lower bound in $r$. We will now discuss this basic case of Shelah's cube lemma, which we refer to as the grid Ramsey problem, in more detail.

\subsection{The grid Ramsey problem}
\label{subsec:intro_grid}

For positive integers $m$ and $n$, let the \emph{grid graph} $\Gamma_{m,n}$
be the graph on vertex set $[m] \times [n]$ where two distinct 
vertices $(i,j)$ and $(i', j')$ are adjacent if and only if 
either $i=i'$ or $j=j'$. That is, $\Gamma_{m,n}$ is the Cartesian product $K_m \times K_n$.
A \emph{row} of the grid graph $\Gamma_{m,n}$ is a subgraph induced on a
vertex subset of the form $\{i\} \times [n]$ and a \emph{column}
is a subgraph induced on a
vertex subset of the form $[m] \times \{j\}$.

A rectangle in $\Gamma_{m,n}$ is a copy of $K_2 \times K_2$, that is, an induced
subgraph over a vertex subset of the form $\{(i,j), (i',j), (i,j'),(i',j') \}$ 
for some integers
$1 \le i < i' \le m$ and $1 \le j < j' \le n$. We will usually denote this rectangle
by $(i,j,i',j')$. For an edge-colored grid graph,
an \emph{alternating rectangle} is a rectangle $(i,j,i',j')$
such that the color of the edges $\{(i,j), (i',j)\}$ and
$\{(i,j'), (i',j')\}$ are equal and the color of the edges
$\{(i,j), (i,j')\}$ and $\{(i',j), (i',j')\}$ are equal, that is, opposite sides of the rectangle 
receive the same color.
An edge coloring of a grid graph is \emph{alternating-rectangle-free}
(or \emph{alternating-free}, for short) if
it contains no alternating rectangle. The function we will be interested in estimating is the following.

\medskip

\noindent \textbf{Definition.}
(i) For a positive integer $r$, define $G(r)$ as the minimum 
integer $n$ for which
every edge coloring of $\Gamma_{n,n}$ with $r$ colors contains an 
alternating rectangle.

\noindent (ii) For positive integers $m$ and $n$, define $g(m, n)$ as the
minimum integer $r$ for which there exists an alternating-free
edge coloring of $\Gamma_{m,n}$ with $r$ colors. Define $g(n) = g(n,n)$.

\medskip

Note that the two functions $G$ and $g$ defined above 
are in inverse relation to each other, in the sense that
$G(r) = n$ implies $g(n-1) \le r$ and $g(n) \ge r+1$,
while $g(n) = r$ implies $G(r) \ge n+1$ and $G(r-1) \le n$.

We have already mentioned Shelah's bound $G(r) \le r^{r+1 \choose 2}$ + 1.
To prove this, let $n = r^{r+1 \choose 2} + 1$ and suppose that an
$r$-coloring of $\Gamma_{r+1, n}$ is given.
There are at most $r^{r+1 \choose 2}$ ways that one can
color the edges of a fixed column with $r$ colors. Since the number of columns
is $n > r^{r+1 \choose 2}$, the pigeonhole principle implies that
there are two columns which are identically colored. Let
these columns be the $j$-th column and the $j'$-th column and consider the
edges that connect these two columns. Since there are
$r+1$ rows, the pigeonhole principle implies that there are two rows
that have the same color. Let these be the $i$-th row and the
$i'$-th row. Then the rectangle $(i,j, i', j')$ is alternating. 
Hence, we see that $G(r) \le n$, as claimed. This argument in fact
establishes the stronger bound
$g(r+1, r^{r+1 \choose 2} + 1) \ge r+1$.

It is surprisingly difficult to improve on this  
simple bound. The only known improvement, $G(r) \leq 
r^{\binom{r+1}{2}} - r^{\binom{r-1}{2} + 1} + 1$, which improves 
Shelah's bound by an additive term, was given by Gy\'arf\'as \cite{Gyarfas}.
Instead, we have focused on the lower bound, proving that $G(r)$ is 
superpolynomial in $r$. As already mentioned, this addresses a question
of Graham, Rothschild and Spencer \cite{GrRoSp}. This question was also 
reiterated by Heinrich \cite{H90} and by Faudree, Gy\'arf\'as and Sz\H{o}nyi \cite{FaGySz},
who proved a lower bound of $\Omega(r^3)$. 
Quantitatively, our main result is the following.

\begin{THM} \label{thm:grid_main}
There exists a positive constant $c$ such that
\[ G(r) > 2^{c (\log r)^{5/2}/\sqrt{\log \log r}}. \]
\end{THM}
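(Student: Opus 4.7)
Since Theorem~\ref{thm:grid_main} is a lower bound on $G(r)$, the proof must be constructive: for each large $r$, one exhibits an alternating-rectangle-free $r$-edge-coloring of $\Gamma_{N,N}$ with $N$ as large as the stated bound. I would attempt a two-stage construction.

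\textbf{Stage 1 (base construction).} Attach to each row $i$ and each column $j$ a label drawn from a structured set --- for instance a $k$-subset $A_i,B_j\subset[m]$, or a vector in $\mathbb{F}_p^d$ --- and define the color of the horizontal edge $(i,j)$--$(i,j')$ and of the vertical edge $(i,j)$--$(i',j)$ as an algebraic function of the endpoint labels (intersection sizes modulo a prime, a bilinear or quadratic form, etc.). The design goal is that the existence of an alternating rectangle $(i,j,i',j')$ translates into a tight system of algebraic equations on the four labels at its corners. A Frankl--Wilson-type modular intersection theorem (or a polynomial-method argument) then lets us select an exponentially large family of labels in which this system has no non-trivial solution, while the number of colors is controlled by the small modulus or field used. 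Natural choices of parameters here should give a bound of the form $G(r)\ge 2^{c(\log r)^\alpha}$ with $\alpha$ around $2$.

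\textbf{Stage 2 (amplification).} To push the exponent above what the base gives, iterate via a product construction. Given alternating-free $r_1$- and $r_2$-colorings of $\Gamma_{n_1,n_1}$ and $\Gamma_{n_2,n_2}$, identify each coordinate of a $\Gamma_{n_1 n_2,\, n_1 n_2}$ with $[n_1]\times[n_2]$, and color each edge according to which sub-coordinate changes, using the corresponding sub-coloring. If done carefully one can show that any alternating rectangle in the large grid projects to one in a factor, so alternating-freeness is preserved, and the number of colors grows only additively (or close to it). Iterating this product $k$ times, and then tuning $k$ against the base parameters, produces the exponent $5/2$; the factor $1/\sqrt{\log\log r}$ arises from optimizing over $k$.

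\textbf{Main obstacle.} The hard step is Stage~2. A naive product of two alternating-free colorings is \emph{not} alternating-free, because alternating rectangles in the combined grid can have corners that differ in both sub-coordinates simultaneously, and such rectangles are not directly controlled by either sub-coloring. Engineering the product rule --- and, if necessary, reshaping the base coloring to accommodate it --- so that every alternating rectangle in the large grid projects to an alternating rectangle in one of the factors is the technical heart of the argument and will likely require a nontrivial case analysis combined with a careful bookkeeping of colors. A secondary, but still delicate, issue is the parameter optimization that produces the precise exponent $5/2$ together with the $\sqrt{\log\log r}$ correction.
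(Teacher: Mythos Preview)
Your plan diverges substantially from the paper's approach and, as stated, has a real gap at Stage~2.

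The obstacle you yourself flag is genuine and is not resolved by anything in the proposal. When you take a product of two alternating-free colorings of $\Gamma_{n_1,n_1}$ and $\Gamma_{n_2,n_2}$, an alternating rectangle in $\Gamma_{n_1 n_2,\,n_1 n_2}$ whose corners differ in both sub-coordinates need not project to an alternating rectangle in either factor, and there is no known way to repair this at the level of grid colorings. Saying that ``engineering the product rule \ldots\ is the technical heart of the argument'' is an accurate diagnosis, but it is not a step in a plan; it is the entire difficulty, and you have not proposed a mechanism. Moreover, even granting a perfect additive product (so that $G(kr_0)\ge G(r_0)^k$), feeding in a base bound of $2^{c\log^2 r_0}$ and optimizing over $k$ does not naturally produce the exponent $5/2$ with a $\sqrt{\log\log r}$ correction; your claim that it does looks back-solved from the target rather than derived.

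The paper proceeds along an entirely different line. The key reformulation (Lemma~\ref{lem:row_chromatic}) is that an alternating-free $r$-coloring of $\Gamma_{m,n}$ is equivalent to choosing $m$ edge-colorings $c_1,\dots,c_m$ of $K_n$ such that for every pair $i,j$ the ``agreement graph'' $\mathcal{G}(c_i,c_j)=\{e:c_i(e)=c_j(e)\}$ has chromatic number at most $r$. One then fixes an edge partition $E(K_n)=E_1\cup\cdots\cup E_t$ and lets each $c_i$ assign a uniformly random color in $[r]$ to each block; the agreement graph of two rows becomes $\mathcal{G}_I=\bigcup_{s\in I}E_s$ for a random $I\subset[t]$ in which each index survives with probability $1/r$ (Lemma~\ref{lem:grid_coloring}). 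The problem thus reduces to constructing a partition for which $\chi(\mathcal{G}_I)$ grows slowly in $|I|$. The paper proves (Theorem~\ref{thm:chi_slow_grow}) that Mubayi's $(4,3)$-coloring $c_M$ has exactly this property: any union of $s$ of its color classes has chromatic number at most $2^{3\sqrt{s\log s}}$. There \emph{is} a product step (Theorem~\ref{thm:chi_slow_grow_less_colors}), but it is a product of copies of $c_M$ on $[N]^t$, not of grid colorings, and it interacts cleanly with the chromatic-number bound via Jensen's inequality. The exponent $5/2$ and the $\sqrt{\log\log r}$ factor then fall out of balancing $m$ against $n$ in the probabilistic lemma.

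In short, the idea your plan is missing is the passage through chromatic numbers of agreement graphs; without that reformulation, the amplification step has no traction.
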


We will build up to this theorem, first giving a substantially simpler proof for the weaker bound $G(r) > 2^{c \log^2 r}$. The following theorem, which includes this result, also contains a number of stronger bounds for the off-diagonal case, improving results of Faudree, Gy\'arf\'as and Sz\H{o}nyi \cite{FaGySz}.

\begin{THM} \label{thm:grid_asymmetric}
\begin{enumerate}
  \setlength{\itemsep}{1pt} \setlength{\parskip}{0pt}
  \setlength{\parsep}{0pt}
  \item[(i)] For all $C > e^2$, $g(r^{\log C/2}, r^{r/2C}) \le r$ for large enough $r$.
  \item[(ii)] For all positive constants $\varepsilon$, $g(2^{\varepsilon \log^2 r}, 2^{r^{1-\varepsilon}}) \le r$ for large enough $r$.
  \item[(iii)] There exists a positive constant $c$ such that $g(cr^2, r^{r^2 /2} / e^{r^2}) \le r$
  for large enough $r$.
\end{enumerate}
\end{THM}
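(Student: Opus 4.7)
The plan is a two-step probabilistic construction. The key reduction is a decoupling observation: once we fix column colorings $f_1,\dots,f_n : \binom{[m]}{2} \to [r]$, the row-edge color $g_i(\{j,j'\})$ appears only in constraints for the specific column pair $\{j,j'\}$. Hence an alternating-rectangle-free extension exists if and only if, for every pair $\{j,j'\}$, the assignment $i \mapsto g_i(\{j,j'\})$ is a proper $r$-coloring of the \emph{conflict graph} $A(j,j')$ on vertex set $[m]$, whose edges are the pairs $\{i,i'\}$ with $f_j(\{i,i'\}) = f_{j'}(\{i,i'\})$. Thus it suffices to choose $f_1,\dots,f_n$ so that $\chi(A(j,j')) \le r$ for every $j<j'$, and then read off the row coloring from an arbitrary proper $r$-coloring of each $A(j,j')$.

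I would produce the $f_j$ by sampling them independently and uniformly from $[r]^{\binom{m}{2}}$. With this choice, each $A(j,j')$ is distributed as the Erd\H{o}s--R\'enyi random graph $G(m,1/r)$, and the crux becomes bounding
\[
q(m,r) \;:=\; \Pr\!\bigl[\chi(G(m,1/r)) > r\bigr]
\]
sharply enough that a union bound (or an alteration step) over the $\binom{n}{2}$ pairs still succeeds for $n$ as large as the theorem asserts. A first-moment computation on the number of proper $r$-colorings of $G(m,1/r)$, restricted to balanced partitions of $[m]$ into $r$ parts of size $m/r$, gives an expectation of order
\[
r^m (1-1/r)^{m^2/(2r)} \;=\; \exp\!\bigl(m\log r - m^2/(2r^2) + O(m)\bigr),
\]
while the dominant obstruction to $r$-colorability is a clique $K_{r+1}$, which appears in $A(j,j')$ with probability at most $\binom{m}{r+1} r^{-\binom{r+1}{2}}$. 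The three parts of the theorem correspond to three regimes of this trade-off: in part~(iii), with $m=cr^2$ and $c$ a small absolute constant, $A(j,j')$ is sparse and $q(m,r) \le e^{-\Theta(r^2\log r)}$, supporting $n$ up to $r^{r^2/2}/e^{r^2}$ (the $e^{-r^2}$ factor coming precisely from the $K_{r+1}$ count). Parts~(i) and~(ii) push $m$ further---super-polynomial or large polynomial in $r$---at the cost of smaller $n$, with the parameters $C$ and $\varepsilon$ chosen to balance the entropic term $m\log r$ against the monochromatic-pair penalty $m^2/(2r^2)$ in the above exponent.

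The main obstacle is obtaining the sharp bound on $q(m,r)$, since a naive first-moment argument only yields \emph{existence} of a proper coloring and not a tail bound on its non-existence. I would address this either by a second-moment (Janson-type) computation on the count of proper colorings---which is technical because indicator variables for distinct colorings are far from independent---or more robustly by an alteration step: sample the $f_j$'s, then delete every column participating in more than $O(n\,q(m,r))$ bad pairs, and invoke Markov's inequality to retain half of the columns. A secondary subtlety is verifying that $K_{r+1}$ really is the tight obstruction across the full parameter range, so that the bounds are not spoiled by higher-order subgraph contributions; this is handled by a careful comparison of the exponents appearing in the three regimes.
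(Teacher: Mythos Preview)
Your reduction to the conflict graphs $A(j,j')$ is correct and is exactly the content of the paper's Lemma~3.1 (with rows and columns interchanged). For part~(iii) your approach is also essentially the paper's: with $m=cr^2$ the conflict graph is $G(cr^2,1/r)$, and the paper bounds $\Pr[\chi>r]$ by a union bound over subgraphs of minimum degree at least $r$ (the critical-subgraph obstruction), obtaining $q(m,r)\le e^{r^2}/r^{r^2/2}$ and then applying alteration just as you suggest.

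The gap is in parts~(i) and~(ii). Your plan of taking the $f_j$ uniformly at random forces $A(j,j')\sim G(m,1/r)$, and you then need $q(m,r)=\Pr[\chi(G(m,1/r))>r]$ to be small. But in part~(ii) we have $m=2^{\varepsilon\log^2 r}=r^{\varepsilon\log r}$, and in part~(i) with any $C>16$ we have $m=r^{\log C/2}>r^2$. In either regime $m\gg r^2\log r$, and then $\chi(G(m,1/r))\sim m/(2r\ln(m/r))\gg r$ almost surely, so $q(m,r)\to 1$ rather than being exponentially small. Your own first-moment calculation already exhibits this: the exponent $m\log r - m^2/(2r^2)$ is negative once $m>2r^2\log r$, so the expected number of proper $r$-colorings is $o(1)$ there, and no second-moment or Janson refinement can rescue it. The $K_{r+1}$ obstruction you single out is not the bottleneck in these regimes; the graph simply has too many edges globally.

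What the paper does differently for~(i) and~(ii) is to abandon uniformly random column colorings in favour of \emph{structured} ones. It takes the binary coloring $c_B$, which partitions $E(K_n)$ into $t=\log n$ classes $E_1,\ldots,E_t$ with the special property that $\chi\bigl(\bigcup_{j\in J}E_j\bigr)=2^{|J|}$ for every $J\subset[t]$, and then lets each column coloring be $c_B$ with a uniformly random relabelling of the $t$ color classes by elements of~$[r]$. The conflict graph for two columns is then $\bigcup_{j\in I}E_j$ for a random $I\subset[t]$ with each index included independently with probability~$1/r$, so $\chi(A(j,j'))>r$ reduces to the purely combinatorial event $|I|>\log r$, whose probability is bounded directly by a binomial tail. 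This is the missing idea: you need a coloring whose ``agreement graphs'' have chromatic number controlled by a low-dimensional statistic, not the full random graph $G(m,1/r)$.
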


Part (i) of this theorem already shows that $G(r)$ is superpolynomial in $r$, while part (ii) implies the more precise bound $G(r) > 2^{c \log^2 r}$ mentioned above, though at the cost of a weaker off-diagonal result. For $(m,n)=(r+1,r^{r+1 \choose 2})$, it is easy to find an alternating-free edge coloring of $\Gamma_{m,n}$ with $r$ colors by reverse engineering the proof of Shelah's bound $G(r) \leq r^{r+1 \choose 2} + 1$. Part (iii) of Theorem~\ref{thm:grid_asymmetric} shows that $n$ can be close to $r^{\binom{r+1}{2}}$ even when $m$ is quadratic in $r$. This goes some way towards explaining why it is so difficult to improve the bound $G(r) \le r^{r+1 \choose 2}$.

\subsection{The Erd\H{o}s--Gy\'arf\'as problem}
\label{subsec:intro_erdos_gyarfas}

The Ramsey-type problem for grid graphs 
considered in the previous subsection is closely related to a problem of Erd\H{o}s and
Gy\'arf\'as on generalized Ramsey numbers. To discuss this problem, we need some definitions.

\medskip 

\noindent \textbf{Definition.}
Let $k, p$ and $q$ be positive integers satisfying
$p \ge k+1$ and $2 \le q \le {p \choose k}$.
 
\noindent (i) For each positive integer $r$, define $F_k(r, p, q)$ as the 
minimum integer $n$ for which every edge coloring of $K^{(k)}_n$ with $r$ colors
contains a copy of $K^{(k)}_p$ receiving at most $q-1$ distinct colors
on its edges.

\noindent (ii) For each positive integer $n$, 
define $f_k(n,p,q)$ as the minimum integer $r$ for which 
there exists an edge coloring of $K^{(k)}_n$ with $r$ colors
such that every copy of $K^{(k)}_p$ receives at least $q$ distinct colors
on its edges.

\medskip

For simplicity, we usually write $F(r,p,q) = F_2(r,p,q)$ and $f(n,p,q) = f_2(n,p,q)$.
As in the previous subsection, the two functions are in 
inverse relation to each other: $F(r, p, q) = n$ implies $f(n, p, q) \ge r+1$
and $f(n-1, p, q) \le r$, 
while $f(n,p,q) = r$ implies $F(r, p, q)\ge n+1$
and $F(r-1,p,q) \le n$.

The function $F(r, p, q)$ generalizes the Ramsey number
since $F(2, p, 2)$ is equal to the Ramsey number $R(p)$.
Call an edge coloring of $K_n$ a
\emph{$(p,q)$-coloring} if every copy
of $K_p$ receives at least $q$ distinct colors on its edges. 
The definitions of $F(r,p,q)$ and $f(n,p,q)$ can also be reformulated
in terms of $(p,q)$-colorings. For example, $f(n, p, q)$ asks for the minimum
integer $r$ such that  there is a $(p,q)$-coloring of $K_n$ using $r$ colors.

The function $f(n,p,q)$ was first introduced by
Erd\H{o}s and Shelah \cite{E75, E81} and then was
systematically studied by Erd\H{o}s and Gy\'arf\'as \cite{ErGy}.
They studied the asymptotics of $f(n,p,q)$ 
as $n$ tends to infinity 
for various choices of parameters $p$ and $q$. It is clear that $f(n,p,q)$ is increasing 
in $q$, but it is interesting to understand the transitions in behavior as $q$ increases.
At one end of the 
spectrum, $f(n, p, 2) \le f(n,3,2) \le \lceil \log n \rceil$, while, at the other end, $f(n,p,{p \choose 2}) = {n \choose 2}$
(provided $p \ge 4$). In the middle range, Erd\H{o}s and Gy\'arf\'as proved that
$f(n,p,p) \ge n^{1/(p-2)} - 1$, which in turn implies that $f(n,p,q)$ is polynomial in $n$ for any $q \geq p$. 
A problem left open by Erd\H{o}s and Gy\'arf\'as was to determine whether $f(n, p, p-1)$ is also polynomial 
in $n$.

For $p = 3$, it is easy to see that $f(n, 3, 2)$ is subpolynomial since it is equivalent to determining
the multicolor Ramsey number of the triangle. For $p = 4$, Mubayi \cite{Mubayi} showed that 
$f(n,4,3) \leq 2^{c \sqrt{\log n}}$ and
Eichhorn and Mubayi \cite{EiMu} showed that
$f(n,5,4) \le 2^{c \sqrt{\log n}}$.
Recently, Conlon, Fox, Lee and Sudakov \cite{CoFoLeSu} 
resolved the question of Erd\H{o}s and Gy\'arf\'as, proving that
$f(n, p, p-1)$ is subpolynomial for all $p \geq 3$.
Nevertheless, the function $f(n,p,p-1)$ is still far
from being well understood, even for $p=4$, where the 
best known lower bound is $f(n,4,3) = \Omega(\log n)$ 
(see \cite{FoSu, KoMu}). 

In this paper, we consider extensions of these problems to hypergraphs.
The main motivation for studying this problem comes from an equivalent formulation
of the grid Ramsey problem (actually, this is Shelah's original formulation). 
Let $K^{(3)}(n,n)$ be the $3$-uniform hypergraph with vertex set
$A \cup B$, where $|A| = |B| = n$, and edge set consisting of all those triples which intersect both $A$ and $B$.
We claim that $g(n)$ is within a factor of two of the minimum integer
$r$ for which there exists an $r$-coloring of the edges of $K^{(3)}(n,n)$ such that 
any copy of $K_4^{(3)}$ has at least $3$ colors on its edges. 
To see the relation, we abuse notation and regard both $A$ and $B$ as copies of the set $[n]$.
For $i \in A$ and $j,j' \in B$, map the edge $\{(i,j), (i, j')\}$ of 
$\Gamma_{n,n}$ to the edge $(i,j,j')$ of $K^{(3)}(n,n)$ and,
for $i, i' \in A$ and $j \in B$, map the edge $\{(i,j), (i', j)\}$ of 
$\Gamma_{n,n}$ to the edge $(i,i',j)$ of $K^{(3)}(n,n)$.
Note that this defines a bijection between the edges of $\Gamma_{n,n}$
and the edges of $K^{(3)}(n,n)$, where the rectangles of $\Gamma_{n,n}$
are in one-to-one correspondence with 
the copies of $K_4^{(3)}$ of $K^{(3)}(n,n)$ intersecting both
sides in two vertices.
Hence, given a desired coloring of $K^{(3)}(n,n)$, we can find a 
coloring of $\Gamma_{n,n}$ where all rectangles receive at least
three colors (and are thus alternating-free), showing that $g(n) \le r$.
Similarly, given an alternating-free coloring of $\Gamma_{n,n}$, we may
double the number of colors to ensure that the set of colors used for row edges
and those used for column edges are disjoint. This turns an alternating-free
coloring of $\Gamma_{n,n}$ into a coloring where each $K_4^{(3)}$
receives at least three colors. Hence, as above, we see that $r \le 2 g(n)$.

Therefore, essentially the only difference between $g(n)$ and $f_3(2n, 4, 3)$ is that
the base hypergraph for $g(n)$ is $K^{(3)}(n,n)$ rather than $K_{2n}^{(3)}$. This observation allows
us to establish a close connection between the quantitative estimates for  
$f_3(n,4,3)$ and $g(n)$, as exhibited by
the following pair of inequalities (that we will prove in
Proposition \ref{prop:relation_g_f}):
\begin{eqnarray} \label{eq:relation_g_f}
g(n) \le f_3(2n, 4,3) \le 2 \lceil \log n \rceil^2 g(n).
\end{eqnarray}
This implies upper and lower bounds for $f_3(n,4,3)$ and $F_3(r,4,3)$
analogous to those we have established for $g(n)$ and $G(r)$.
More generally, we have the following recursive upper bound for $F_k(r, p, q)$.

\begin{THM} \label{thm:step_down}
For positive integers $r, k$, $p$ and $q$, all greater than $1$
and satisfying $r \ge k$, $p \geq k+1$ and $2 \leq q \leq \binom{p}{k}$,
\[ F_k\left( r, p, q \right) \le r^{{F_{k-1}(r,p-1,q) \choose k-1}}. \]
The above is true even for $q > {p-1 \choose k-1}$, where we trivially
have $F_{k-1}(r,p-1,q) = p-1$.
\end{THM}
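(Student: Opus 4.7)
The plan is to generalize Shelah's iterated-pigeonhole argument so that a $k$-uniform $r$-coloring is folded down onto a single $(k-1)$-uniform coloring on a structured subset, to which $F_{k-1}$ can then be applied. Set $M = F_{k-1}(r,p-1,q)$ and $N = r^{\binom{M}{k-1}}$, and let $\chi$ be any $r$-coloring of $K_N^{(k)}$.

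The first step is to build a sequence $v_1,v_2,\ldots,v_{M+1}$ and nested candidate sets $V_0=[N]\supseteq V_1\supseteq\cdots\supseteq V_M$ greedily, maintaining the invariant that for every $(k-1)$-subset $T\subseteq\{v_1,\ldots,v_i\}$ and every $v\in V_i$ the color $\chi(T\cup\{v\})$ depends only on $T$. At step $i$, pick any $v_i\in V_{i-1}$ and take $V_i$ to be a largest pigeonhole class of $V_{i-1}\setminus\{v_i\}$ under the map sending $v$ to the tuple of colors $\chi(T\cup\{v\})$, indexed by the new $(k-1)$-subsets $T\subseteq\{v_1,\ldots,v_i\}$ that contain $v_i$. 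There are $\binom{i-1}{k-2}$ such $T$, so $|V_i|\ge\lceil(|V_{i-1}|-1)/r^{\binom{i-1}{k-2}}\rceil$, and the hockey-stick identity $\sum_{i=1}^{M}\binom{i-1}{k-2}=\binom{M}{k-1}$ shows that the budget $N=r^{\binom{M}{k-1}}$ (up to a harmless additive correction) is enough to leave $V_M$ nonempty, so one further vertex $v_{M+1}\in V_M$ is available.

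The second step is to define $\phi:\binom{\{v_1,\ldots,v_M\}}{k-1}\to[r]$ by $\phi(T)=\chi(T\cup\{v\})$ for any $v\in V_M$ (well-defined by the invariant), and to apply $F_{k-1}(r,p-1,q)=M$ to $\phi$, obtaining a $(p-1)$-subset $S\subseteq\{v_1,\ldots,v_M\}$ on which $\phi|_{\binom{S}{k-1}}$ uses at most $q-1$ colors. It then remains to verify that the $K_p^{(k)}$ on $S\cup\{v_{M+1}\}$ has every $k$-edge colored inside this palette. A $k$-subset of the form $T'\cup\{v_{M+1}\}$ with $T'\in\binom{S}{k-1}$ has color $\phi(T')$ immediately. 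A $k$-subset $\{v_{i_1},\ldots,v_{i_k}\}\subseteq S$ with $i_1<\cdots<i_k$ has color $\phi(\{v_{i_1},\ldots,v_{i_{k-1}}\})$ by the invariant at step $i_{k-1}$, which applies because $v_{i_k}\in V_{i_k-1}\subseteq V_{i_{k-1}}$.

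The hard part is precisely this second case: a naive reduction through the link of a single vertex gives no control over $k$-subsets lying entirely inside $S$. The nesting $V_M\subseteq V_{i_{k-1}}$ for every $i_{k-1}\le M$ is the structural feature that makes the argument work, because it lets the same function $\phi$ simultaneously govern both the ``cross'' $k$-subsets through $v_{M+1}$ and the ``intra-target'' $k$-subsets inside $\{v_1,\ldots,v_M\}$. The trivial case $q>\binom{p-1}{k-1}$, where $M=p-1$ forces $S=\{v_1,\ldots,v_{p-1}\}$, is covered by the same argument.
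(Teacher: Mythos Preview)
Your proposal is correct and follows essentially the same approach as the paper: both build a chain $X_t=\{v_1,\ldots,v_t\}$, $Y_t=V_t$ by iterated pigeonholing on color vectors indexed by the $\binom{t}{k-2}$ new $(k-1)$-subsets, use the hockey-stick identity to show the budget $N=r^{\binom{M}{k-1}}$ leaves $V_M$ nonempty, define the induced $(k-1)$-uniform coloring $\phi$ on $\{v_1,\ldots,v_M\}$, apply $F_{k-1}$, and append a final vertex from $V_M$. The paper states the invariant slightly differently (``the color of any $k$-edge with at least $k-1$ elements in $X_t$ is determined by its first $k-1$ elements'') and is more explicit about the additive correction---showing $|Y_{t+1}|>N/r^{\binom{t+1}{k-1}}-1$ and then using integrality, which confirms your ``harmless'' claim---but the arguments are otherwise identical.
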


By repeatedly applying Theorem \ref{thm:step_down}, we see 
that for each fixed $i$ with $0 \le i \leq k$ and large enough $p$, 
\[ F_k\left(r, p, {p-i \choose k-i} + 1\right) \le r^{r^{\iddots^{r^{c_{k,p}}} }}, \]
where the number of $r$'s in the tower is $i$.
For $0 < i < k$, it would be interesting to establish a lower bound on $F_k(r, p, {p-i \choose k-i})$ 
that is significantly larger than this upper bound on $F_k(r,p,{p-i \choose k-i} + 1)$.
This would establish an interesting phenomenon of `sudden jumps' 
in the asymptotics of $F_k(r,p,q)$ at the values $q = {p-i \choose k-i}$.
We believe that these jumps indeed occur. 

Let us examine some small cases of this problem. For graphs, as mentioned above,
$F(r,p,p)$ is polynomial while $F(r,p,p-1)$ is superpolynomial.
For 3-uniform hypergraphs, $F_3(r, p, {p-1 \choose 2} + 1)$ is polynomial
in terms of $r$. Hence, the first interesting case is to decide
whether the function $F_3(r, p, {p-1 \choose 2})$ is also
polynomial. The fact that $F_3(r,4,3)$ is superpolynomial
follows from Theorem~\ref{thm:grid_main}
and \eqref{eq:relation_g_f}, 
giving some evidence towards the conjecture that $F_3(r,p,{p-1 \choose 2})$
is superpolynomial. We provide further evidence by establishing
the next case for 3-uniform hypergraphs.

\begin{THM} \label{thm:F_3_5_6}
There exists a positive constant $c$ such that $F_3(r,5,6) \ge 2^{c\log^2 r}$
for all positive integers $r$.
\end{THM}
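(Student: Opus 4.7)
The plan is to prove the lower bound by explicit construction: for each large $r$, exhibit an $r$-coloring of $K_n^{(3)}$ with $n = 2^{c \log^2 r}$ such that every copy of $K_5^{(3)}$ uses at least $6$ distinct colors. This matches the lower bound $F_3(r,4,3) \geq 2^{c \log^2 r}$ implied by Theorem~\ref{thm:grid_main} together with the relation~\eqref{eq:relation_g_f}, and Theorem~\ref{thm:F_3_5_6} is the next case in the conjectured pattern $F_3(r,p,\binom{p-1}{2})$ being superpolynomial.

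The construction follows the template of Mubayi's $(4,3)$-coloring, suitably enriched. Identify each vertex $v$ with a word $\mathbf{s}(v) \in [a]^b$ where $a,b = \Theta(\sqrt{\log r})$, so that $n = a^b = 2^{\Theta(\log^2 r)}$. The color of a triple $\{u,v,w\}$ is defined to be a short tuple of ``first difference'' statistics: the smallest coordinate $i_1$ at which $(u_{i_1}, v_{i_1}, w_{i_1})$ are not all equal, the equality-pattern of that letter-triple, together with one or two auxiliary pieces of data such as the first coordinate past $i_1$ at which the two letters that agreed at $i_1$ begin to differ. The parameters are chosen so that the palette has size at most $r$.

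To verify the $(5,6)$-property, take any 5 vertices and let $i^*$ be the first coordinate at which their five words are not all equal. The values at $i^*$ partition the five vertices into equivalence classes, and the argument proceeds by case analysis on this partition. The ``spread-out'' cases — partitions of type $(1,1,1,1,1)$, $(2,1,1,1)$, or $(2,2,1)$ — immediately produce at least $6$ distinct color-signatures just from the data at $i^*$. The hard cases are the highly collapsed ones, namely $(4,1)$ and $(3,2)$, where many triples share a common signature at $i^*$ or must be colored entirely by later coordinates.

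The main obstacle is the $(4,1)$ configuration. There the six triples of signature $\{a,a,b\}$ all carry the identical primary label $(i^*, \{a,a,b\})$, and the four triples of signature $\{a,a,a\}$ live on a $K_4^{(3)}$ whose colors are determined by coordinates beyond $i^*$. Reaching the target of $6$ colors thus requires designing the auxiliary statistic so that it genuinely separates the six $\{a,a,b\}$ triples (via, e.g., the first coordinate past $i^*$ at which the two ``$a$'' vertices of the triple disagree), while simultaneously guaranteeing enough color diversity on the residual $K_4^{(3)}$ through a recursive use of the same coloring on a shorter word length. Managing this recursion on the word length, and balancing it against the palette budget, is the technical core of the proof and is what produces the quadratic exponent $\log^2 r$.
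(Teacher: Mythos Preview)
Your outline --- a product of first-difference statistics followed by a case split on how the five vertices partition at the first coordinate where they are not all equal --- has the right shape, but two of the concrete ingredients you name are too weak, and the paper's construction differs in precisely those places. The auxiliary datum ``first coordinate past $i_1$ at which the two agreeing vertices differ'' is essentially the coloring $c_B$ applied to that pair; this is only a graph $(3,2)$-coloring, so on the $\binom{4}{2}=6$ pairs coming from the four $a$-vertices in your $(4,1)$ case it guarantees merely two values, and together with three colors on the residual $K_4^{(3)}$ you reach only five. The paper instead records $c_4 := c_M(u,v)$ --- Mubayi's graph $(4,3)$-coloring --- on the agreeing pair, which forces at least three colors among those six triples; this upgrade from $c_B$ to $c_M$ is equally indispensable in the delicate $(3,2)$ subcases. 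Second, ``recursive use of the same coloring'' on the residual $K_4^{(3)}$ cannot help as stated: a $(5,6)$-coloring asserts nothing about four-element sets. The paper does not recurse; it includes as one factor a ready-made $3$-uniform $(4,3)$-coloring $c_1$ (supplied by Theorem~\ref{thm:grid_main} via Proposition~\ref{prop:relation_g_f}), which directly pins three colors on every $K_4^{(3)}$ and is invoked repeatedly in the $(3,2)$ analysis.

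A minor point: the parameter choice $a,b=\Theta(\sqrt{\log r})$ gives $n=a^b=2^{\Theta(\sqrt{\log r}\,\log\log r)}$, not $2^{\Theta(\log^2 r)}$. The paper uses binary strings of length $\log n$ --- so only the $(4,1)$ and $(3,2)$ partitions ever occur, and your ``spread-out'' cases never arise --- and the $\log^2 r$ exponent comes from the color count of $c_M$, namely $2^{O(\sqrt{\log n})}$, being the bottleneck among the four factors $c_1\times c_2\times c_3\times c_4$.
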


\subsection{A chromatic number version of the Erd\H{o}s--Gy\'arf\'as problem}

A graph with chromatic number equal to $p$ we call {\it $p$-chromatic}. In the process of studying $G(r)$ (and proving Theorem \ref{thm:grid_main}), we encountered
the following variant of the functions discussed
in the previous subsection, where $K_p$ is replaced by $p$-chromatic subgraphs.

\medskip 

\noindent \textbf{Definition.}
Let $p$ and $q$ be positive integers satisfying
$p \ge 3$ and $2 \le q \le {p \choose 2}$.
 
\noindent (i) For each positive integer $r$, define $F_\chi(r, p, q)$ as the 
minimum integer $n$ for which every edge coloring of $K_n$ with $r$ colors
contains a $p$-chromatic subgraph receiving 
at most $q-1$ distinct colors on its edges.

\noindent (ii) For each positive integer $n$, 
define $f_\chi(n,p,q)$ as the minimum integer $r$ for which 
there exists an edge coloring of $K_n$ with $r$ colors
such that every $p$-chromatic subgraph
receives at least $q$ distinct colors on its edges.

\medskip

Call an edge coloring of $K_n$ a \emph{chromatic-$(p,q)$-coloring} if every $p$-chromatic subgraph
receives at least $q$ distinct colors on its edges. As before, the
definitions of $F_{\chi}(r,p,q)$ and $f_{\chi}(n,p,q)$ can be restated in terms of
chromatic-$(p,q)$-colorings.
Also, an edge coloring of $K_n$ is a chromatic-$(p,q)$-coloring if and only
if the union of every $q-1$ color classes induces a graph
of chromatic number at most $p-1$. In some sense, 
this looks like the most natural interpretation for the functions
$F_\chi(r,p,q)$ and $f_\chi(n,p,q)$. If we choose to use this definition,
then it is more natural to shift the numbers $p$ and $q$ by $1$. However,
we use the definition above in order to make the connection 
between $F_\chi(r,p,q)$ and $F(r,p,q)$ more transparent. 

From the definition, we can immediately deduce some simple facts such as
\begin{align} \label{eq:intro_chi_1}
F_{\chi}(r,p,q) \le F(r,p,q), \qquad f_\chi(n,p,q) \ge f(n,p,q)
\end{align}
for all values of $r,p,q, n$ and
\[
f_{\chi}\left(n,p, {p \choose 2}\right) = f\left(n,p, {p \choose 2}\right)
= {n \choose 2}
\]
for all $n \ge p \ge 4$.

Let $n = F_{\chi}(r,p,q)-1$ and
consider a chromatic-$(p,q)$-coloring of $K_n$ that uses $r$ 
colors in total. Cover the
set of $r$ colors by $\lceil r/(q-1) \rceil$ subsets 
of size at most $q-1$. The chromatic number of the graph induced by
each subset is at most $p-1$ and thus, by the product formula
for chromatic number, we see that
\begin{align*} \label{eq:intro_chi_upper}
F_{\chi}(r, p,q) - 1 \le (p-1)^{\lceil r/(q-1) \rceil}.
\end{align*}
This gives a general exponential upper bound. 

On the other hand, when $d$ is a positive integer,
$p = 2^d + 1$ and $q=d+1$, a coloring
of the complete graph which we will describe in Section~\ref{sec:preliminaries}
implies that
\[ F_{\chi}(r, 2^d + 1, d + 1) \ge 2^{r} + 1. \]
Whenever $r$ is divisible by $d$, we see that the two bounds 
match to give 
\begin{align} \label{eq:intro_chi_2}
F_{\chi}(r, 2^d + 1, d + 1) = 2^{r} + 1.
\end{align}

Let us examine the value of $F_\chi(r,p,q)$ for some small values of $p$ and $q$.
By using the observations \eqref{eq:intro_chi_1} and \eqref{eq:intro_chi_2}, we have
\[
F_\chi(r,3,2) =  2^r + 1, \qquad F_\chi(r,3,3) \le F(r, 3, 3) \le r+2 \]
for $p=3$ and 
\[
F_\chi(r,4,2) \geq  2^r + 1, \qquad F_\chi(r,4,4) \le F(r, 4, 4) \le r^2 + 2
\]
for $p=4$ and $r \geq 2$. The bound on $F(r,4,4)$ follows since every edge coloring of 
the complete graph on $r^2+2$ vertices with $r$ colors contains a vertex $v$ and a set $X$ of size at least
$r+1$ such that all the edges connecting $v$ to $X$ have the same color, say red. If an edge $e$ with vertices in $X$ is red, we get a $K_4$ using at most three colors by taking $v$, the vertices of $e$, and any other vertex in $X$. So we may suppose at most $r-1$ colors are used on the edges with both vertices in $X$. In this case, as $|X| \ge F(r-1,3,3)$, we can find three vertices in $X$ with at most 
two colors used on the edges connecting them. 
Adding $v$ to this set gives a set of four vertices with
at most three colors used on the edges connecting them.
 
We show that the asymptotic behavior of $F_\chi(r,4,3)$ 
is different from both $F_\chi(r,4,2)$ and $F_\chi(r,4,4)$.

\begin{THM} \label{thm:chi_4_3}
There exist positive constants $C$ and $r_0$ such that for all $r \ge r_0$,
\[ 2^{\log^2 r/36} \le F_{\chi}(r, 4, 3) \le C \cdot 2^{130 \sqrt{r\log r}}. \] 
\end{THM}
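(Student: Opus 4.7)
The plan is to prove the two bounds of Theorem~\ref{thm:chi_4_3} by separate methods. For the lower bound I will exhibit, for $n$ up to $2^{\log^2 r/36}$, an explicit $r$-coloring of $K_n$ in which the union of any two color classes has chromatic number at most $3$; this is equivalent to chromatic-$(4,3)$. For the upper bound I will show by an iterative reduction that any chromatic-$(4,3)$-coloring of $K_n$ with $r$ colors forces $n \leq C\cdot 2^{130\sqrt{r\log r}}$.

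For the lower bound my construction builds on the chromatic-$(3,2)$-coloring of $K_{2^s}$ from~\eqref{eq:intro_chi_2} with $d=1$, in which vertices are $\{0,1\}^s$ and the edge color is the first coordinate of difference. I would partition $[s]$ into blocks $B_1,\dots,B_t$ of roughly equal size $b=s/t$ and color an edge $\{u,v\}$ by a pair $(i,\tau(u|_{B_i},v|_{B_i}))$, where $i$ is the least index with $u|_{B_i}\neq v|_{B_i}$ and $\tau$ is a function of the two block values. The parameters $t,b$ and the image size of $\tau$ are chosen so that the total palette has size $r$, while $s=tb$ is as large as the chromatic-$(4,3)$ condition allows. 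Verification of the condition proceeds by decomposing $K_{2^s}$ into the disjoint prefix-classes of vertices agreeing on the first few blocks and producing an explicit $3$-coloring of the union of any two colors, one prefix-class at a time; the choice of $\tau$ is exactly what makes the local $3$-coloring feasible, and balancing the resulting constraints yields $s\leq (\log r)^2/36$.

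For the upper bound, given any chromatic-$(4,3)$-coloring of $K_n$ with $r$ colors I fix, for each pair of colors $\{i,j\}$, a proper $3$-coloring $\phi_{ij}:V\to\{1,2,3\}$ of $G_i\cup G_j$; each part of $\phi_{ij}$ is then independent in both $G_i$ and $G_j$. The central reduction lemma produces a subset $S\subseteq V$ with $|S|\geq n/r^{O(1)}$ and a set $T$ of $\sqrt{r\log r}$ colors such that no color of $T$ occurs on any edge inside $S$; the restricted coloring on $S$ is then a chromatic-$(4,3)$-coloring with only $r-\sqrt{r\log r}$ colors. Iterating this lemma $\sqrt{r/\log r}$ times uses up all the colors while losing a total factor of $r^{O(\sqrt{r/\log r})}=2^{O(\sqrt{r\log r})}$ in $n$, which forces $n\leq C\cdot 2^{130\sqrt{r\log r}}$. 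To prove the lemma I would use a dependent-random-choice-style argument: sample a short random seed of vertices, then keep only those vertices that lie in the same $\phi_{ij}$-part as the seed for every pair $(i,j)$ drawn from $T$, so that every color of $T$ is forbidden inside the retained set.

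The main obstacle is the reduction lemma. The naive bound $\chi\bigl(\bigcup_{c\in T}G_c\bigr)\leq 3^{|T|/2}$, obtained by pairing up the colors in $T$ and applying the hypothesis to each pair independently, gives an independent set of size only $n/3^{|T|/2}$, which is far too small and leads only to a single-exponential upper bound; the proof must exploit correlations among the $\phi_{ij}$ across different pairs to replace $3^{|T|/2}$ with $r^{O(1)}$ per batch of $|T|=\sqrt{r\log r}$ colors. On the lower-bound side, the corresponding subtle point is that coarse merging of colors in the chromatic-$(3,2)$-coloring immediately destroys $3$-colorability of two-color unions, so the hash $\tau$ must be carefully engineered and the resulting $3$-colorings verified case by case, which is where most of the combinatorial content of the construction lies.
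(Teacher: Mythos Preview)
Your lower-bound plan is close in spirit to the paper's: the paper shows that Mubayi's coloring $c_M$ (Section~\ref{sec:preliminaries}) is itself a chromatic-$(4,3)$-coloring, and $c_M$ has precisely the block structure you describe, with vertices in $[m]^t$ and the color recording the first block $i$ where $x,y$ differ together with $\{x_i,y_i\}$. One caution: $c_M$ also records the full vector $(a_1,\dots,a_t)$ of which blocks agree, not just data from block $i$. Your $\tau$ depends only on $(u|_{B_i},v|_{B_i})$, and it is not clear such a purely local hash suffices; the paper's case analysis (Case~2a versus Case~2b in the proof) splits precisely on the value of $a_{1,i_2}$, i.e.\ on whether the block indexed by one color is a coordinate of disagreement for the other. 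So your sketch is on the right track, but the global information carried by the $a$-vector is what makes the explicit $3$-colorings go through.

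For the upper bound there is a genuine gap, which you yourself flag: you have no proof of the reduction lemma, and the dependent-random-choice heuristic does not supply the missing idea. The paper's mechanism is quite different. It fixes one distinguished color (the densest, call it red) and applies the Peng--R\"odl--Ruci\'nski dense-pair theorem (Theorem~\ref{thm:regularity}) to the red graph with $\varepsilon=\sqrt{(\ln r)/r}$, obtaining an $(\varepsilon,\tfrac{1}{2r})$-dense pair $(V_1,V_2)$ each of size $\ge n e^{-13\sqrt{r\ln r}}$. For every other color $c$ the graph $\mathrm{red}\cup c$ is $3$-colorable; the dense-pair property forbids any part of a proper $3$-coloring from meeting both $V_1$ and $V_2$ in an $\varepsilon$-fraction, and pigeonholing then gives a part $I_c$ occupying a $(1-2\varepsilon)$-fraction of one side. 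A convexity argument now finds $r/4$ colors whose sets $I_c$ have common intersection of size $\ge (1-8\varepsilon)^{r/4}|V_1|$, and that intersection avoids those $r/4$ colors entirely. One step thus removes a constant fraction of the colors at cost $e^{O(\sqrt{r\ln r})}$ in $n$; iterating $O(\log r)$ times gives the bound. The idea you were missing is that the dense pair upgrades ``$I_c$ has size $|V|/3$'' to ``$I_c$ has size $(1-o(1))|V_1|$'', and this near-full measure is exactly what makes the intersection of many $I_c$'s large. Your proposed sampling over all pairs $\phi_{ij}$ has no comparable structural input, which is why it cannot beat the trivial $3^{|T|/2}$ loss you already identified.
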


Despite being interesting in its own right, our principal motivation for considering chromatic-$(p,q)$-colorings was to establish the following theorem,
which is an important ingredient in the proof of Theorem \ref{thm:grid_main}.

\begin{THM} \label{thm:chi_slow_grow}
For every fixed positive integer $n$, 
there exists an edge coloring of the complete graph $K_n$ with $2^{6\sqrt{\log n}}$ colors 
with the following property: for every
subset $X$ of colors with $|X| \ge 2$, the subgraph induced by the edges colored
with a color from $X$ has chromatic number at most $2^{3 \sqrt{|X| \log |X|}}$.
\end{THM}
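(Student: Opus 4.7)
\medskip
\noindent\textbf{Proof proposal for Theorem~\ref{thm:chi_slow_grow}.}
The plan is to construct the required coloring of $K_n$ explicitly via a hierarchical/product-type construction based on the ``first differing coordinate'' coloring that already appears in the proof of \eqref{eq:intro_chi_2}. I would first recall this base coloring $\psi_d$: identify $V(K_{2^d})$ with the hypercube $\{0,1\}^d$ and assign to each edge $\{u,v\}$ the smallest coordinate $i$ at which $u_i \ne v_i$. This uses $d$ colors and, for any $X \subseteq [d]$, the map $v \mapsto v|_X$ is a proper vertex coloring of $G_X^{\psi_d}$, giving the bound $\chi(G_X^{\psi_d}) \le 2^{|X|}$.

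Next I would assemble the desired coloring by combining several copies of $\psi_d$ with a block structure on the coordinates. Let $L = \lceil \log_2 n \rceil$, choose parameters $s$ and $t$ with $st \approx L$, and partition $[L]$ into blocks $B_1,\dots,B_t$ of size $s$. Define the edge color of $\{u,v\}$ to be the pair $(i^*, \psi_s(u|_{B_{i^*}}, v|_{B_{i^*}}))$, where $i^*$ is the first block on which $u,v$ disagree and $\psi_s$ is the base coloring restricted to that block. For a subset $X$ of colors, let $I(X) \subseteq [t]$ be the set of block indices appearing in $X$ and, for each $i \in I(X)$, let $Y_i$ be the set of base colors of block $i$ that appear in $X$. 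The product vertex coloring
\[
v \;\longmapsto\; \bigl(\phi_i(v|_{B_i})\bigr)_{i \in I(X)},
\]
where $\phi_i$ is a proper coloring of the base-subgraph on $\{0,1\}^s$ induced by $Y_i$, is then a proper coloring of $G_X$: if the images agree, then $u,v$ agree on every block in $I(X)$, so their first differing block falls outside $I(X)$ and the edge is not in $G_X$. This yields $\chi(G_X) \le \prod_{i \in I(X)} \chi(G_{Y_i}^{\psi_s}) \le 2^{\sum_i |Y_i|} = 2^{|X|}$.

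Parameter tuning is then used to balance the two bounds in the theorem. The total number of colors is $r = t \cdot s$ (or $t\cdot(2^s-1)$ if one uses the stronger refinement that records the XOR of block $i^*$), and the vertex count is $n = 2^{st}$. The target $r \le 2^{6\sqrt{\log n}}$ forces $s$ and $t$ to be of order roughly $\sqrt{\log n\cdot\log\log n}$ and $\sqrt{\log n/\log\log n}$, respectively, so that $st = L$ and $r$ matches the advertised budget. The chromatic estimate is refined by observing that, for each $i \in I(X)$, the subset $Y_i$ has size at most $|X|$ but typically much smaller when $|X|$ is not concentrated in one block; a Cauchy--Schwarz style inequality on $\sum_i \sqrt{|Y_i|\log |Y_i|}$ converts the naive product bound $2^{\sum |Y_i|} = 2^{|X|}$ into the desired subexponential bound $2^{3\sqrt{|X|\log|X|}}$, once the correct base coloring is substituted for $\psi_s$.

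The main obstacle is that the naive base $\psi_s$ only gives $\chi \le 2^{|Y|}$ for each factor, and hence the product gives at best $2^{|X|}$, which is too weak. Overcoming this requires a more delicate choice of the per-block coloring whose own subset-chromatic function is genuinely subexponential: for instance, recursively applying the same block construction inside each block, so that the exponent $|X|$ in the simple bound is itself replaced by $\sqrt{|X|\log|X|}$ at the next level. The bookkeeping for this recursion---in particular verifying that the chromatic bound is monotone across all sizes $|X| \ge 2$ and that the recursive budget in colors does not blow past $2^{6\sqrt{\log n}}$---is the technical heart of the proof, and the constants $6$ and $3$ in the statement are pinned down precisely by the optimal choice of the recursion depth and the block sizes.
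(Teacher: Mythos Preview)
Your block/product construction is sound as far as it goes, and the product bound $\chi(\mathcal{G}_X) \le \prod_{i \in I(X)} \chi(\mathcal{G}_{Y_i}^{\psi_s})$ is correct. The difficulty you flag at the end is real, but the proposed fix---recursing the block structure---does not resolve it. If the base coloring only satisfies $\chi(\mathcal{G}_Y) \le 2^{|Y|}$, then one level of blocking gives $\chi(\mathcal{G}_X) \le \prod_i 2^{|Y_i|} = 2^{\sum_i |Y_i|} = 2^{|X|}$: exactly the same bound. Iterating further never improves this, because the exponent is additive over the partition $X = \bigcup_i Y_i$, so by induction every level of the hierarchy yields the bound $2^{|X|}$. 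There is nothing to bootstrap. Worse, if one did already have a base bound of the form $2^{c\sqrt{y\log y}}$, a single level of blocking with $t$ blocks would, via the concavity argument you sketch, produce $2^{c\sqrt{t\,|X|\log|X|}}$, which is \emph{weaker} by a factor of $\sqrt{t}$ in the exponent. Thus your Cauchy--Schwarz/Jensen step is useful only once a subexponential base bound is already in hand; and indeed the paper uses exactly this product-plus-concavity argument in the proof of Theorem~\ref{thm:chi_slow_grow_less_colors}, but only \emph{with Theorem~\ref{thm:chi_slow_grow} as input}, not as a way to prove it.

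The paper's proof of Theorem~\ref{thm:chi_slow_grow} supplies the missing idea via a different mechanism. It works directly with the coloring $c_M$ and, given a set of colors $X$, builds an auxiliary graph $\mathcal{H}$ on vertex set $X$ that records how pairs of colors interact (through the index $\iota(c)$ of the first nonzero coordinate and the $0/1$ pattern $a_j(c)$). The two structural facts are: (i) if $I\subseteq X$ is independent in $\mathcal{H}$ then $\mathcal{G}_I$ is bipartite, and (ii) $\chi(\mathcal{G}_X)$ is at most the total number of independent sets of $\mathcal{H}$. The bound then follows from a dichotomy on the independence number $\alpha(\mathcal{H})$: if $\mathcal{H}$ has an independent set of size at least $\sqrt{|X|/\log|X|}$, peel it off at a cost of a factor $2$ in chromatic number and recurse on the smaller color set; otherwise every independent set is small, so there are at most $|X|^{O(\sqrt{|X|/\log|X|})} = 2^{O(\sqrt{|X|\log|X|})}$ of them. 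This Ramsey-flavored tradeoff between stripping off a large independent set and counting all independent sets is what is absent from your proposal.
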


This theorem has the following immediate corollary.

\begin{COR}
For all positive integers $n, r$ and $s \geq 2$, 
\[ 
	f_{\chi}(n, 2^{3\sqrt{s \log s}} + 1, s + 1) \le 2^{6\sqrt{\log n}}
	\quad \text{and} \quad
	F_{\chi}(r, 2^{3\sqrt{s \log s}} + 1, s + 1) \ge 2^{\log^2 r/36}.
\]
\end{COR}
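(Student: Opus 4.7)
Both inequalities follow essentially by unpacking the conclusion of Theorem \ref{thm:chi_slow_grow} and combining it with the inverse relationship between $F_\chi$ and $f_\chi$ noted after their definition.

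For the first inequality, take $n \ge 2$ and apply Theorem \ref{thm:chi_slow_grow} to produce an edge coloring of $K_n$ using $R := 2^{6\sqrt{\log n}}$ colors such that every color subset $X$ with $|X| \ge 2$ induces a subgraph of chromatic number at most $2^{3\sqrt{|X|\log|X|}}$. I would then verify that this coloring is a chromatic-$(p,q)$-coloring with $p = 2^{3\sqrt{s\log s}} + 1$ and $q = s+1$. As noted after the definition of chromatic-$(p,q)$-colorings, this is equivalent to showing that the union of any $q-1 = s$ color classes has chromatic number at most $p - 1$. For a color subset $X$ with $2 \le |X| \le s$, the bound from Theorem \ref{thm:chi_slow_grow} combined with the monotonicity of $x\log x$ gives chromatic number at most $2^{3\sqrt{|X|\log|X|}} \le 2^{3\sqrt{s\log s}} = p - 1$. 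For $|X| \le 1$, I would enlarge $X$ to a two-element subset $X'$ of the palette (possible since $R \ge 2$ when $n \ge 2$) and observe that the chromatic number of the $X$-subgraph is at most that of the $X'$-subgraph, which is bounded by $2^{3\sqrt{2\log 2}} \le p - 1$ for $s \ge 2$. The case $n = 1$ is trivial.

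For the second inequality, I would use the equivalence $F_\chi(r,p,q) \ge n+1 \iff f_\chi(n,p,q) \le r$. Take $n = \lfloor 2^{\log^2 r / 36} \rfloor$; then $\log n \le \log^2 r / 36$, so $6\sqrt{\log n} \le \log r$, and applying the first inequality yields
\[
f_\chi(n, p, q) \le 2^{6\sqrt{\log n}} \le r,
\]
hence $F_\chi(r, p, q) \ge n + 1 \ge 2^{\log^2 r / 36}$.

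There is no serious obstacle: the argument is a direct deduction from Theorem \ref{thm:chi_slow_grow} followed by an elementary rewriting. The only detail requiring care is that Theorem \ref{thm:chi_slow_grow} controls only subsets of colors of size at least $2$, which forces a brief enlargement step when handling singleton color classes.
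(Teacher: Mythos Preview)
Your proposal is correct and matches the paper's approach exactly: the paper offers no proof beyond calling the corollary ``immediate'' from Theorem~\ref{thm:chi_slow_grow}, and your argument is precisely the natural unpacking of that immediacy. The care you take with the $|X|\le 1$ case (via enlargement) and with the inversion from $f_\chi$ to $F_\chi$ is appropriate and fills in details the paper leaves implicit.
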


Our paper is organized as follows.
In Section \ref{sec:preliminaries}, we review two coloring functions that 
will be used throughout the paper. 
In Section \ref{sec:gridramsey}, we prove Theorems \ref{thm:grid_main} and \ref{thm:grid_asymmetric}. 
In Section \ref{sec:eg}, we prove Theorems \ref{thm:step_down} and \ref{thm:F_3_5_6}.
In Section \ref{sec:chi_eg}, we prove Theorems \ref{thm:chi_4_3} and \ref{thm:chi_slow_grow}.
We conclude with some further remarks and open problems in Section~\ref{sec:conclusion}.

\medskip

\noindent \textbf{Notation.}
We use $\log$ for the base 2 logarithm
and $\ln$ for the natural logarithm. For the sake of 
clarity of presentation, we systematically omit floor and ceiling signs 
whenever they are not essential.  We also do not make any serious attempt 
to optimize absolute constants in our statements and proofs. 

The following standard asymptotic notation will be used throughout. 
For two functions $f(n)$ and $g(n)$, we write $f(n) =
o(g(n))$ if $\lim_{n \rightarrow \infty} f(n)/g(n) = 0$ and $f(n) =
O(g(n))$ or $g(n) = \Omega(f(n))$ if there exists a constant $M$ such
that $|f(n)| \leq M|g(n)|$ for all sufficiently large $n$.  We also
write $f(n) = \Theta(g(n))$ if both $f(n) = O(g(n))$ and $f(n) =
\Omega(g(n))$ are satisfied.

\section{Preliminaries}
\label{sec:preliminaries}

We begin by defining two edge colorings of the complete graph $K_n$, one a $(3,2)$-coloring
and the other a $(4,3)$-coloring. These will be used throughout the paper.

We denote the $(3,2)$-coloring by $c_B$, where `B' stands
for `binary'. To define this coloring, we let $t$ be the smallest integer such that
$n \leq 2^t$. We consider the vertex set $[n]$ as a subset of $\{0,1\}^t$ 
by identifying $x$ with $(x_1, \dots, x_t)$, where $\sum_{i=1}^t x_i 2^{i-1}$ is the 
binary expansion of $x - 1$. Then, for two vertices $x = (x_1, \dots, x_t)$ and $y = (y_1, \dots, y_t)$,
$c_B(x,y)$ is the minimum $i$ for which $x_i \neq y_i$.
This coloring uses at most $\lceil \log n \rceil$ colors and
is a $(3,2)$-coloring since three vertices cannot all differ in the $i$-th coordinate.
In fact, it is a chromatic-$(2^d + 1, d+1)$-coloring for all integers $d \ge 1$,
since it gives an edge partition
$E = E_1 \cup \dots \cup E_{t}$ of $K_n$ for $t = \lceil \log n \rceil$ such that, 
for all $J \subset [t]$, the graph consisting of the edges $\bigcup_{j \in J} E_j$
has chromatic number at most $2^{|J|}$.

The $(4,3)$-coloring, which is a 
variant of Mubayi's coloring \cite{Mubayi}, will be denoted by $c_M$. To define this coloring,
we let $t$ be the smallest integer such that $n \leq 2^{t^2}$ and $m = 2^t$. We consider the vertex
set $[n]$ as a subset of $[m]^t$ by identifying $x$ with $(x_1, \dots, x_t)$, this time by examining the
base $m$ expansion of $x-1$. For two vertices
$x = (x_1, \ldots,x_t)$ and $y = (y_1 ,\ldots, y_t)$, let
\[ c_M(x,y) = \Big( \{x_i, y_i\}, a_1, \ldots,a_t \Big), \] 
where $i$ is the minimum index in which $x$ and $y$ differ
and $a_j = 0$ or $1$ depending on whether $x_j = y_j$ or not (note that the coloring function 
$c_M$ is symmetric in its two variables).

The coloring $c_M$ is a $(3,2)$-coloring since it partitions the edge set of $K_n$ into bipartite graphs 
and a simple case analysis similar to that given in \cite{Mubayi} shows that it is a $(4,3)$-coloring (the proof of Theorem \ref{thm:chi_4_3} given in Section \ref{sec:chi_eg} shows that $c_M$ is even a chromatic-$(4,3)$-coloring).
Since $2^{(t-1)^2} < n$, the total number of colors used is at most
\[ m^2 \cdot 2^t = 2^{3 t} < 2^{3 (1 + \sqrt{\log n})} \leq 2^{6 \sqrt{\log n}}. \]
Hence, $c_M$ uses at most $r = 2^{6\sqrt{\log n}}$ colors
to color the edge set of the complete graph on 
$n = 2^{\log^2 r/ 36}$ vertices.

\section{The grid Ramsey problem}
\label{sec:gridramsey}

In order to improve the lower bound on $G(r)$, we need to find
an edge coloring of the grid graph which is alternating-free.
The following lemma is the key idea behind our argument. 
For two edge-coloring functions $c_1$ and $c_2$ of the complete
graph $K_n$, let $\mathcal{G}(c_1, c_2)$ be the subgraph
of $K_n$ where $e$ is an edge if and only 
if $c_1(e) = c_2(e)$.

\begin{LEMMA} \label{lem:row_chromatic}
Let $m, n$ and $r$ be positive integers.
There exists an alternating-rectangle-free edge coloring 
of $\Gamma_{m,n}$ with $r$ colors if and only if
there are edge colorings $c_1, \ldots, c_m$ of 
the complete graph $K_n$ with $r$ colors satisfying
\[ \chi(\mathcal{G}(c_i, c_j)) \le r \]
for all pairs of indices $i,j$.
\end{LEMMA}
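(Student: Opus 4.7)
The plan is to prove the lemma by directly translating the structure of an edge coloring of $\Gamma_{m,n}$ into the language of colorings of $K_n$. The key observation is that row edges and column edges in $\Gamma_{m,n}$ play asymmetric roles: row edges within row $i$ naturally give an edge coloring $c_i$ of $K_n$ on the column labels, while column edges between rows $i$ and $i'$ naturally give a \emph{vertex} coloring $d_{i,i'}$ of $[n]$ (where $d_{i,i'}(j)$ is the color of the edge $\{(i,j),(i',j)\}$). I would begin the proof by fixing this notation explicitly.

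The heart of the argument will then be the following observation, which I would state and verify right after setting up notation: a rectangle $(i,j,i',j')$ is alternating if and only if $c_i(\{j,j'\}) = c_{i'}(\{j,j'\})$ and $d_{i,i'}(j) = d_{i,i'}(j')$. By the definition of $\mathcal{G}(c_i, c_{i'})$, the first condition is exactly the statement that $\{j,j'\}$ is an edge of $\mathcal{G}(c_i, c_{i'})$. Consequently, the coloring of $\Gamma_{m,n}$ is alternating-free if and only if, for every pair $i \ne i'$, the vertex coloring $d_{i,i'}$ is a proper coloring of $\mathcal{G}(c_i, c_{i'})$ with at most $r$ colors.

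From this equivalence, both directions fall out immediately. For the forward direction, an alternating-free $r$-coloring of $\Gamma_{m,n}$ produces $c_1,\ldots,c_m$ together with proper $r$-colorings $d_{i,i'}$ of each $\mathcal{G}(c_i, c_{i'})$, so $\chi(\mathcal{G}(c_i,c_{i'})) \le r$. For the reverse direction, given colorings $c_1,\ldots,c_m$ with $\chi(\mathcal{G}(c_i,c_{i'})) \le r$ for all pairs, I would pick for each pair $i<i'$ an arbitrary proper $r$-coloring $d_{i,i'}$ of $\mathcal{G}(c_i, c_{i'})$, and construct an edge coloring of $\Gamma_{m,n}$ by using $c_i$ for the edges in row $i$ and using $d_{i,i'}(j)$ for the column edge $\{(i,j),(i',j)\}$. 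These assignments are consistent because column edges with different endpoint pairs $\{i,i'\}$ and $\{i'',i'''\}$ are distinct edges of $\Gamma_{m,n}$, so the $d_{i,i'}$ for different pairs can be chosen independently.

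There is no real obstacle here: the lemma is a clean unpacking of definitions once one notices that the ``row'' direction of $\Gamma_{m,n}$ encodes edge colorings of $K_n$ while the ``column'' direction encodes vertex colorings of $K_n$, and that the alternating-rectangle condition precisely couples these two pieces via the graph $\mathcal{G}(c_i, c_{i'})$. The only point that warrants a brief remark is the freedom in choosing the $d_{i,i'}$ independently for distinct pairs $(i,i')$, which is what allows the chromatic conditions (one per pair) to combine into a single alternating-free coloring of the grid.
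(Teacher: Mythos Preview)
Your proposal is correct and is essentially the same argument as the paper's proof: the paper phrases the column-edge step via an auxiliary graph $H_{i,i'}$ on the column edges between rows $i$ and $i'$, but this graph is canonically identified with $\mathcal{G}(c_i,c_{i'})$ and its proper vertex colorings are exactly your $d_{i,i'}$. Your presentation is if anything slightly cleaner, since you work directly with $\mathcal{G}(c_i,c_{i'})$ rather than introducing $H_{i,i'}$ as an intermediate object.
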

\begin{proof}
We first prove the `if' statement.
Consider the grid graph $\Gamma_{m,n}$. For each $i$, color the edges of
the $i$-th row using the edge coloring $c_i$. 
Then, for each distinct pair of indices $i$ and $i'$, 
construct auxiliary graphs $H_{i,i'}$ whose vertex 
set is the set of edges that connect
the $i$-th row with the $i'$-th row (that is, edges of the form $\{(i,j), (i',j)\}$) and
where two vertices 
$\{(i,j), (i',j)\}$ and  $\{(i,j'), (i',j')\}$
are adjacent if and only if the two row edges that 
connect these two column edges
have the same color. 

The fact that $\chi(\mathcal{G}(c_i, c_{i'})) \le r$ implies that
there exists a vertex coloring of $H_{i,i'}$ with $r$ colors. Color the 
corresponding edges $\{(i,j), (i',j)\}$ according to this vertex coloring. 
Under this coloring, we see that whenever
a pair of edges of the form $\{(i,j), (i,j')\}$ and $\{(i',j), (i',j')\}$
have the same color, the colors of the edges 
$\{(i,j), (i',j)\}$ and $\{(i,j'), (i',j')\}$ are distinct.
This gives a coloring of the column edges. 
Hence, we found the required alternating-rectangle-free edge
coloring of $\Gamma_{m,n}$ with $r$ colors.

For the `only if' statement, given an alternating-rectangle-free
edge coloring of $\Gamma_{m,n}$ with $r$ colors, define
$c_i$ as the edge-coloring function of the $i$-th row of $\Gamma_{m,n}$,
for each $i \in [m]$. One can easily reverse the 
process above to show that the colorings $c_1, \ldots, c_m$ satisfy 
the condition. We omit the details.
\end{proof}

To find an alternating-rectangle-free 
edge coloring of $\Gamma_{m,n}$, we will find
edge colorings of the rows which satisfy the condition 
of Lemma \ref{lem:row_chromatic}. 
Suppose that $E(K_n) = E_1 \cup \dots \cup E_t$ is a partition
of the edge set of $K_n$. For an index subset $I \subset [t]$, 
we let $\mathcal{G}_I$ be the subgraph of $K_n$ whose edge set is
given by $\bigcup_{i\in I} E_i$.

\begin{LEMMA} \label{lem:grid_coloring}
Let $m, n, r$ and $t$ be positive integers. Suppose that an
edge partition $E(K_n) = E_1 \cup \dots \cup E_t$ of $K_n$ is given. 
Let $I$ be a random subset of $[t]$ where 
each element in $I$ is chosen independently with probability $1/r$ and suppose that
\[ \mathbb{P}[\chi(\mathcal{G}_I) \ge r+1] \le \frac{1}{2m}. \]
Then $g(m, n) \le r$ and $G(r) \ge \min\{m,n\} + 1$.
\end{LEMMA}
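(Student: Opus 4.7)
The plan is to combine Lemma~\ref{lem:row_chromatic} with a probabilistic alteration argument, oversampling rows and then discarding a few to destroy all conflicts. By Lemma~\ref{lem:row_chromatic}, it is enough to find $m$ edge colorings $c_1,\ldots,c_m$ of $K_n$ with at most $r$ colors each such that $\chi(\mathcal{G}(c_i,c_j)) \le r$ for every $i\neq j$. Since the hypothesis only controls the failure probability of a single pair by $1/(2m)$, a naive union bound over all $\binom{m}{2}$ pairs is not quite strong enough; instead I would sample $N=2m$ candidate colorings and then throw away the rows that cause trouble.

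Concretely, for each $i\in[2m]$ I would independently pick $\phi_i:[t]\to[r]$ uniformly at random, and let $c_i$ be the coloring of $K_n$ that assigns every edge of $E_k$ the color $\phi_i(k)$. For any fixed pair $i\neq j$, the random set $I_{ij}=\{k:\phi_i(k)=\phi_j(k)\}$ contains each element of $[t]$ independently with probability $1/r$, so $I_{ij}$ has exactly the distribution of $I$ in the hypothesis. Moreover $\mathcal{G}(c_i,c_j)=\mathcal{G}_{I_{ij}}$, so the hypothesis gives $\mathbb{P}[\chi(\mathcal{G}(c_i,c_j))\ge r+1]\le 1/(2m)$; I will call such a pair \emph{bad}.

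By linearity of expectation, the expected number of bad pairs among the $\binom{2m}{2}$ pairs is at most $\binom{2m}{2}/(2m)=(2m-1)/2<m$, so there exists a realization of the $\phi_i$'s with at most $m-1$ bad pairs. Taking one endpoint from each bad pair produces a vertex cover of size at most $m-1$ in the ``bad pair graph'' on $[2m]$, whose complement is an independent set $S\subseteq[2m]$ of size at least $m+1$ consisting of pairwise compatible rows. Any $m$ elements of $S$ then give colorings satisfying the hypothesis of Lemma~\ref{lem:row_chromatic}, and hence $g(m,n)\le r$. The second conclusion follows because $\Gamma_{k,k}$ with $k=\min\{m,n\}$ sits inside $\Gamma_{m,n}$ as an induced subgraph, so restricting an alternating-free $r$-coloring gives $g(k)\le r$, i.e.\ $G(r)\ge k+1=\min\{m,n\}+1$.

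There is no deep obstacle here; the only subtlety is calibrating the sample size to the hypothesis. Sampling just $m$ rows would leave an expected $\Theta(m)$ bad pairs, which is not small enough to recover $m$ good rows after deletion, so the factor-of-two oversampling is exactly what the $1/(2m)$ bound is designed to afford.
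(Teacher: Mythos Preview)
Your argument is correct and matches the paper's proof essentially line for line: sample $2m$ random maps $[t]\to[r]$, observe that each pairwise agreement set is distributed as $I$, bound the expected number of bad pairs by $\binom{2m}{2}/(2m)<m$, delete one endpoint of each bad pair, and invoke Lemma~\ref{lem:row_chromatic}. Your version is marginally tighter (you obtain $m+1$ surviving rows rather than $m$ by noting the expectation is strictly below $m$) and you spell out the $G(r)\ge\min\{m,n\}+1$ deduction, but there is no substantive difference.
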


\begin{proof}
For each $i \in [2m]$, choose a vector $v_i \in [r]^t$ 
independently and uniformly at random.
Let $c_i$ be an edge coloring of $K_n$ with $r$ colors where for 
each $t' \in [t]$, we color all the edges in $E_{t'}$ using the
value of the $t'$-th coordinate of $v_i$.
%obtained
%by the following random process: for each $t' \in [t]$, we select
%a color independently and uniformly at random from the $r$ colors
%and use that color for all the edges in $E_{t'}$ (the choices for distinct
%indices $i \in [2m]$ are also independent).
Color the $i$-th row of $\Gamma_{2m,n}$ (which is a copy of $K_n$) 
using $c_i$.

For a pair of distinct indices $i,j \in [2m]$, let $I(i,j)$ 
be the subset of indices $t' \in [t]$ for which $v_i$ and $v_j$
have the same value on their $t'$-th coordinates (thus implying that
$c_i$ and $c_j$ use the same color on $E_{t'}$). Then
$I(i,j)$ has the same distribution as a random subset of 
$[t]$ obtained by taking each element independently with probability
$1/r$. Moreover,
\[ \mathcal{G}(c_i, c_j) = \mathcal{G}_{I(i,j)}. \]
Hence, 
\[ \mathbb{P}[\chi(\mathcal{G}(c_i, c_j)) \ge r+1] = \mathbb{P}[\chi(\mathcal{G}_{I(i,j)}) \ge r+1] \le \frac{1}{2m}. \]

Therefore, the expected number of pairs $i,j$ with $i<j$ having
$\chi(\mathcal{G}_{I(i,j)}) \ge r+1$ is at most ${2m \choose 2} \frac{1}{2m} \le m$. 
Hence, there exists a choice of coloring functions $c_i$ for which this number is 
at most $m$. If this event happens, then we can remove one row
from each pair $i,j$ having $\chi(\mathcal{G}_{I(i,j)}) \ge r+1$
to obtain a set $R \subset [2m]$ of size 
at least $m$ which has the property that $\chi(\mathcal{G}_{I(i,j)}) \le r$
for all $i,j \in R$. By considering the subgraph of $\Gamma_{2m,n}$
induced on $R \times [n]$ and using Lemma \ref{lem:row_chromatic}, we obtain
an alternating-rectangle-free edge coloring of $\Gamma_{m,n}$ with $r$ colors.
The result follows.
\end{proof}

We prove Theorems \ref{thm:grid_main} and \ref{thm:grid_asymmetric}
in the next two subsections. 
We begin with Theorem \ref{thm:grid_asymmetric}, which establishes upper bounds for 
$g(m,n)$ in various off-diagonal regimes. As noted in the introduction, parts (i) and (ii) already yield weak versions of 
Theorem~\ref{thm:grid_main}. In particular, part (i) implies that $G(r)$ is superpolynomial in $r$, 
while part (ii) yields the bound $G(r) > 2^{c \log^2 r}$. We recall the stronger off-diagonal statements below.

\subsection{Proof of Theorem \ref{thm:grid_asymmetric}}
\label{subsec:grid_asymmetric}
  
\noindent \textbf{Parts (i) and (ii)} : For all $C > e^2$, $\varepsilon > 0$
and large enough $r$, $g(r^{\log C/2}, r^{r/2C}) \le r$ and 
$g(2^{\varepsilon \log^2 r}, 2^{r^{1-\varepsilon}}) \le r$.

\medskip

Let $n = 2^t$ for some $t$ to be chosen later. The edge coloring $c_B$
from Section \ref{sec:preliminaries} gives an edge partition
$E = E_1 \cup \dots \cup E_{t}$ of $K_n$ for $t = \log n$ such that, 
for all $J \subset [t]$, 
\[ \chi(\mathcal{G}_J) = 2^{|J|}. \]
Hence, if we let $I$ be a random subset of $[t]$ obtained by choosing each element
independently with probability $1/r$, then  
\begin{align}
\mathbb{P}[\chi(\mathcal{G}_I) \ge r+1] 
&= \mathbb{P}\big[|I| \ge \log(r+1)\big] \nonumber \\
&\le {t \choose \log(r+1)} \frac{1}{r^{\log(r+1)}}
 \le \left( \frac{et}{r \log (r+1)} \right)^{\log (r+1)}. \label{eq:grid_result1}
\end{align}

For part (i), let $C$ be a given constant and take $t = r\log r /2C$. 
Then the right-hand side of \eqref{eq:grid_result1}
is at most $(r+1)^{-\log(C/e)}$. In Lemma \ref{lem:grid_coloring},
we can take $m = \frac{1}{2}(r+1)^{\log (C/e)} \ge r^{\log C/2}$ 
and $n = 2^{t} = r^{r/2C}$ to get
\[ g(r^{\log C/2}, r^{r/2C}) \le r.  \]

For part (ii), let $\varepsilon$ be a given constant
and take $t = r^{1-\varepsilon}$. For large enough $r$,
the right-hand side of \eqref{eq:grid_result1} is at 
most $\frac{1}{2}r^{-\varepsilon \log r}$. Hence,
by applying Lemma \ref{lem:grid_coloring} with $m = r^{\varepsilon \log r} = 
2^{\varepsilon \log^2 r}$ and $n = 2^t = 2^{r^{1-\varepsilon}}$, we see that
\[ g(2^{\varepsilon \log^2 r}, 2^{r^{1-\varepsilon}}) \le r. \]

\medskip

\noindent \textbf{Part (iii)} : There exists a positive constant $c$ such that $g(cr^2, r^{r^2 /2} / e^{r^2}) \le r$ for large enough $r$.

\medskip

Let $c=e^{-3}$. Let $n = cr^2$ and partition the edge set of $K_n$ into $t = {n \choose 2}$
sets $E_1, \ldots, E_t$, each of size exactly one. As before, 
let $I$ be a random subset of $[t]$ obtained by choosing each element
independently with probability $1/r$.
In this case, we get $\mathcal{G}_I = \mathcal{G}(n, \frac{1}{r})$ 
(where $\mathcal{G}(n, p)$ is the binomial random graph). 
Therefore,
\[ \mathbb{P}[\chi(\mathcal{G}_I) \ge r+1] = \mathbb{P}[\chi (\mathcal{G}(cr^2, 1/r)) \ge r+1].  \]
The event $\chi\left(\mathcal{G}(cr^2, \frac{1}{r})\right) \ge r+1$ 
is contained in the event that
$\mathcal{G}(cr^2, \frac{1}{r})$ contains a subgraph of order $s \ge r+1$ 
of minimum degree at least $r$. 
The latter event has probability at most
\begin{align}
\label{eqn:g_n_p_chromatic}
 \sum_{s=r+1}^{cr^2} {cr^2 \choose s} {s^2/2 \choose rs/2} \left(\frac{1}{r}\right)^{rs/2}
 &\le 
 \sum_{s=r+1}^{cr^2} \left( \left( \frac{ecr^2}{s} \right)^{2}  \left(\frac{es}{r} \right)^{r} \left(\frac{1}{r} \right)^{r}  \right)^{s/2}.
\end{align}
For $s=r$, if $r$ is large enough, then the summand is
\[ \left( (ecr)^{2}  e^{r} \left(\frac{1}{r} \right)^{r}  \right)^{r/2}
   \le \frac{e^{r^2}}{r^{r^2 /2}}\,. \]
We next show that the summands are each at most a quarter of the previous summand. As the series starts at $s=r+1$ and ends at $s=cr^2$, the series is then at most half the summand for $s=r$. The ratio of the summand for $s+1$ to the summand for $s$, where $r+1 \leq s+1 \leq cr^2$, is 
$$
\left(\frac{s+1}{s}\right)^{s(r-2)/2} \left(\left(\frac{ecr^2}{s+1}\right)^2\left(\frac{e(s+1)}{r}\right)^r\left(\frac{1}{r}\right)^r\right)^{1/2}$$ 
which is at most 
$$e^{(r-2)/2} \left(\frac{e^{r+2} c^2 (s+1)^{r-2}}{r^{2r-4}}\right)^{1/2} \leq e^{(r-2)/2} (e^{r+2} c^r )^{1/2} = e^{-r/2} < \frac{1}{4},
$$
for $r$ sufficiently large. 

Hence, the right-hand side of \eqref{eqn:g_n_p_chromatic} is
at most $e^{r^2}r^{-r^2/2}/2$ and
\[ \mathbb{P}[\chi(\mathcal{G}_I) \ge r+1] \le \frac{e^{r^2}}{2r^{r^2 /2}}.  \]
By Lemma \ref{lem:grid_coloring}, we conclude that
$g(cr^2, r^{r^2 / 2} / e^{r^2}) \le r$.

\subsection{Proof of Theorem \ref{thm:grid_main}}
\label{subsec:grid_main}

In the previous subsection we used quite simple edge partitions of the complete graph
as an input to Lemma \ref{lem:grid_coloring} to prove Theorem \ref{thm:grid_asymmetric}.
These partitions were already good enough to give the superpolynomial bound $G(r) > 2^{c \log^2 r}$.
To further improve this bound and prove Theorem \ref{thm:grid_main}, 
we make use of a slightly more sophisticated edge 
partition guaranteed by the following theorem.

\begin{THM} \label{thm:chi_slow_grow_less_colors}
There exists a positive real $r_0$ such that the following holds
for positive integers $r$ and positive reals $\alpha \le 1$
satisfying $(\log r)^\alpha \ge r_0$.
For $n = 2^{(\log r)^{2 + \alpha}/200}$,
there exists a partition $E = E_1 \cup \dots \cup E_{\sqrt{r}}$ of
the edge set of the complete graph $K_n$ such that
\[ \chi(\mathcal{G}_I) \le 2^{3(\log r)^{\alpha/2}\sqrt{|I| \log 2|I|}} \]
for all $I \subset [\sqrt{r}]$.
\end{THM}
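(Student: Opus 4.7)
The plan is to build the partition via a product construction using the Mubayi coloring $c_M$ from Section~\ref{sec:preliminaries}. Setting $u = (\log r)^{\alpha/2}$ and $d = \lfloor u^2/2\rfloor$, I would choose $n_1$ (a power of $2$) with $n_1^d \ge n$, giving $\log n_1 = O((\log r)^2)$. Identifying $V(K_n)$ with a subset of $[n_1]^d$, the coloring $c$ on $K_n$ is defined for distinct $x,y$ by
\[ c(x,y) \;=\; \bigl(c_M(x_i,y_i),\, i\bigr), \]
where $i = i(x,y)$ is the smallest coordinate at which $x$ and $y$ differ and $c_M$ is the Mubayi coloring of $K_{n_1}$. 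Since $c_M$ on $K_{n_1}$ uses $O(2^{3\sqrt{\log n_1}})$ colors and $d \le (\log r)^\alpha/2$, the total number of $c$-colors is at most a polynomial-in-$r$ expression that for the chosen parameters sits below $\sqrt{r}$ for all sufficiently large $r$. Taking each $c$-color to be a part and padding with empty parts then produces the required partition of $E(K_n)$ into exactly $\sqrt{r}$ pieces.

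To verify the chromatic bound, I would fix $I$ with $s = |I|$ and decompose $I$ by its second coordinate as $I = \bigsqcup_{j\in J}(R_j\times\{j\})$, where $s_j := |R_j| \ge 1$ for $j \in J$, $\sum_j s_j = s$, and $|J| \le d$. A proper coloring of $\mathcal{G}_I$ comes from a Cartesian product: letting $\phi_j$ be a proper coloring of the subgraph $\mathcal{G}_{R_j}$ of $K_{n_1}$ in the coloring $c_M$, I would color $x = (x_1,\ldots,x_d)$ by the tuple $(\phi_1(x_1),\ldots,\phi_d(x_d))$. If $\{x,y\} \in \mathcal{G}_I$ has $i(x,y) = i$, then $c(x,y) \in I$ forces $c_M(x_i,y_i) \in R_i$, so $\{x_i,y_i\} \in E(\mathcal{G}_{R_i})$ and $\phi_i(x_i) \ne \phi_i(y_i)$. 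Hence $\chi(\mathcal{G}_I) \le \prod_{j\in J}\chi(\mathcal{G}_{R_j})$.

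For the final estimate, by Theorem~\ref{thm:chi_slow_grow} (whose proof in Section~\ref{sec:chi_eg} establishes the bound for the Mubayi coloring $c_M$), together with monotonicity to handle $s_j = 1$ by enlarging $R_j$ to size $2$, one obtains $\chi(\mathcal{G}_{R_j}) \le 2^{3\sqrt{\max(s_j,2)\log\max(s_j,2)}}$. Using $\max(s_j,2)\log\max(s_j,2) \le 2 s_j \log(2s_j)$ for $s_j \ge 1$ and the super-additivity bound $\sum_j s_j \log(2 s_j) \le s\log(2s)$ (from convexity of $x \log(2x)$), Cauchy--Schwarz yields
\[
\sum_{j\in J} \sqrt{\max(s_j,2)\log\max(s_j,2)} \;\le\; \sqrt{|J|\cdot 2 s\log(2s)} \;\le\; \sqrt{u^2\cdot s\log(2s)} \;=\; u\sqrt{s\log(2s)}.
\]
Multiplying by $3$ in the exponent gives $\chi(\mathcal{G}_I) \le 2^{3u\sqrt{s\log(2s)}} = 2^{3(\log r)^{\alpha/2}\sqrt{|I|\log 2|I|}}$, exactly as required.

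The main obstacle is threading the constants cleanly: the factor $1/200$ in $n$, the factor $1/2$ in $d$, and the constant $3$ in the target exponent jointly pin down the setup above, and one must exploit the sharper count $O(2^{3\sqrt{\log n_1}})$ of colors in the Mubayi coloring rather than the looser bound $2^{6\sqrt{\log n_1}}$ recorded in the statement of Theorem~\ref{thm:chi_slow_grow}; the latter would overshoot the $\sqrt{r}$ budget. Beyond this, some routine bookkeeping is needed to handle floors and ceilings in the definitions of $d$ and $n_1$, and to verify that the hypothesis $(\log r)^\alpha \ge r_0$ suffices to make $d \ge 1$ and the color count comparison $O(r^{1/2 - \epsilon}) \le \sqrt{r}$ go through.
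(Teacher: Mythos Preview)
Your approach is essentially the same as the paper's: both build the coloring as a product, taking the vertex set to be $[N]^t$ (your $[n_1]^d$), coloring an edge by the pair $(i,\, c_M(v_i,w_i))$ where $i$ is the first coordinate of disagreement, and then bounding $\chi(\mathcal{G}_I)$ by the product $\prod_j \chi(\mathcal{G}_{I_j})$ over coordinate slices, invoking Theorem~\ref{thm:chi_slow_grow} on each factor.

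The one notable technical difference is in the final estimate. The paper notes that $\sqrt{x\log 2x}$ is concave and applies Jensen directly to obtain
\[
\sum_{i} \sqrt{|I_i|\log 2|I_i|} \;\le\; t\sqrt{(|I|/t)\log(2|I|/t)} \;\le\; \sqrt{t\,|I|\log 2|I|}
\]
with $t = (\log r)^{\alpha}$, whereas you route through Cauchy--Schwarz together with super-additivity of $x\log 2x$, picking up a factor of~$2$ that forces $d = \lfloor(\log r)^{\alpha}/2\rfloor$ and hence a larger base $n_1$. This is exactly why you then need the sharper $O(2^{3\sqrt{\log n_1}})$ color count for $c_M$. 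With the paper's parameters $t=(\log r)^{\alpha}$ and $N = 2^{(\log r)^2/200}$, the cruder bound $2^{6\sqrt{\log N}} \le \sqrt{r}/\log r$ already suffices (since $6/\sqrt{200} < 1/2$), so the ``main obstacle'' you flag is an artifact of your inequality choice rather than something intrinsic to the argument. Both versions are correct; the Jensen route is just a bit cleaner.
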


The proof of this theorem is based on 
Theorem \ref{thm:chi_slow_grow}, which is in turn based on considering the coloring $c_M$, and will be given in Section \ref{sec:chi_eg}.

Now suppose that a positive integer $r$ is given and let 
$\alpha \leq 1$ be a real to be chosen later. Let $E_1 \cup \dots \cup E_{\sqrt{r}}$
be the edge partition of $K_n$ for $n = 2^{(\log r)^{2 + \alpha}/200}$
given by Theorem \ref{thm:chi_slow_grow_less_colors}.
Let $I$ be a random subset of $[\sqrt{r}]$ chosen by taking each
element independently with probability $\frac{1}{r}$. Then,
by Theorem \ref{thm:chi_slow_grow_less_colors}, we have
\[ \chi(\mathcal{G}_I) \ge r+1 \quad \Rightarrow \quad |I| \ge c\frac{(\log r)^{2 - \alpha}}{\log \log r}, \]
for some positive constant $c$. Therefore,
\begin{align*}
\mathbb{P}[\chi(\mathcal{G}_I) \ge r+1  ]
&\le \mathbb{P}\left[|I| \ge c \frac{(\log r)^{2 - \alpha}}{\log \log r}\right] \\
&\le {\sqrt{r} \choose c(\log r)^{2 - \alpha}/\log \log r} \left(\frac{1}{r} \right)^{c(\log r)^{2 - \alpha}/ \log \log r} 
 \le r^{-c'(\log r)^{2 - \alpha}/\log \log r}
\end{align*}
holds for some positive constant $c'$. 
By Lemma \ref{lem:grid_coloring}, for $m = 2^{c'(\log r)^{3-\alpha} / \log \log r - 1}$, 
we have $g(m, n) \le r$. We may choose $\alpha$ so that 
\[ m = n = e^{\Omega((\log r)^{5/2}/ \sqrt{\log \log r})}. \]
This gives $G(r) \ge 2^{\Omega((\log r)^{5/2}/ \sqrt{\log \log r})}$, as required.

\section{The Erd\H{o}s--Gy\'arf\'as problem}
\label{sec:eg}

In the introduction, we discussed how the grid Ramsey problem is connected to a 
hypergraph version of the Erd\H{o}s--Gy\'arf\'as problem. We now establish this
correspondence more formally.

\begin{PROP} \label{prop:relation_g_f}
For all positive integers $n$, we have
\[ g(n) \le f_3(2n, 4,3) \le 2 \lceil \log n \rceil ^2 g(n). \]
\end{PROP}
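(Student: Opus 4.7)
The inequality splits into two independent directions. For the lower bound $g(n)\le f_3(2n,4,3)$, the plan is to start with any $(4,3)$-coloring $\psi$ of $K_{2n}^{(3)}$ in $r:=f_3(2n,4,3)$ colors, view the vertex set as $A\sqcup B$ with $|A|=|B|=n$, restrict $\psi$ to $K^{(3)}(n,n)$, and push it through the edge-bijection with $\Gamma_{n,n}$ from Section~\ref{subsec:intro_erdos_gyarfas}. Because the rectangles of $\Gamma_{n,n}$ correspond exactly to the $K_4^{(3)}$'s in $K^{(3)}(n,n)$ meeting both sides in two vertices, each such $K_4^{(3)}$ receives at least three colors by hypothesis, so no rectangle of $\Gamma_{n,n}$ becomes alternating, yielding an alternating-free $r$-coloring of $\Gamma_{n,n}$.

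For the upper bound $f_3(2n,4,3)\le 2\lceil\log n\rceil^2 g(n)$ I would work by induction on $n$, establishing the stronger estimate
\[
f_3(2m,4,3)\;\le\;f_3(m,4,3)+2\,g(m)\,\lceil\log m\rceil
\]
and then telescoping. To prove this step, I plan to combine three ingredients: an alternating-free coloring $\phi\colon E(\Gamma_{m,m})\to[g(m)]$, a $(4,3)$-coloring $\chi$ of $K_m^{(3)}$ in $f_3(m,4,3)$ colors, and the $(3,2)$-coloring $c_B$ of $K_m$ from Section~\ref{sec:preliminaries}. Writing $[2m]=A\sqcup B$ with $|A|=|B|=m$, I use three pairwise disjoint palettes $\mathcal{C}_{\mathrm{in}},\mathcal{C}_{\mathrm{col}},\mathcal{C}_{\mathrm{row}}$ of sizes $f_3(m,4,3)$, $g(m)\lceil\log m\rceil$, $g(m)\lceil\log m\rceil$, and color a triple $T$ by $\chi(T)\in\mathcal{C}_{\mathrm{in}}$ when $T\subset A$ or $T\subset B$ (sharing $\chi$ on both sides), by the pair $(c_B(a_1,a_2),\phi(e_T))\in\mathcal{C}_{\mathrm{col}}$ when $T=\{a_1<a_2,b\}$ with $e_T$ the associated column edge of $\Gamma_{m,m}$, and by the symmetric color in $\mathcal{C}_{\mathrm{row}}$ when $T=\{a,b_1<b_2\}$.

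Verifying $(4,3)$ is a short case analysis on the intersection pattern $(|S\cap A|,|S\cap B|)$ of the vertex set $S$ of a $K_4^{(3)}$. The cases $(4,0)$ and $(0,4)$ are immediate from $\chi$. In $(3,1)$, disjointness of $\mathcal{C}_{\mathrm{in}}$ from $\mathcal{C}_{\mathrm{col}}$ separates the within-$A$ triple from the three column triples, while the $(3,2)$-property of $c_B$ applied to the three pairs in $A$ forces at least two distinct $c_B$-coordinates among the latter, giving $1+2=3$ colors. In $(2,2)$, the two column triples share a $c_B$-coordinate and likewise the two row triples, so the non-alternating property of $\phi$ forces one of these two pairs to differ in $\phi$, and together with the disjointness of $\mathcal{C}_{\mathrm{col}}$ from $\mathcal{C}_{\mathrm{row}}$ this supplies three colors. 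Telescoping the inductive estimate from the base $f_3(1,4,3)=0$ and using monotonicity of $g$ then yields
\[
f_3(2n,4,3)\;\le\;\sum_{i=0}^{\lceil\log n\rceil-1}2\,g(n/2^i)\,\lceil\log(n/2^i)\rceil\;\le\;g(n)\,\lceil\log n\rceil(\lceil\log n\rceil+1)\;\le\;2\lceil\log n\rceil^2 g(n).
\]

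\textbf{The hardest part} should be the $(3,1)$-case in the inductive step: the three mixed triples involved sit in a single column of $\Gamma_{m,m}$, whose $\phi$-coloring might contain a monochromatic triangle, so using $\phi$ alone fails and we must multiply it by $c_B$ to exploit the $(3,2)$-property. A related difficulty --- producing a standalone $(4,3)$-coloring of $K_n^{(3)}$ in $\lesssim \log^2 n$ colors, where natural product-of-$c_B$ constructions actually fail to be $(4,3)$ --- is sidestepped entirely by the recursion, which farms the within-side problem out to a smaller instance of the same inequality.
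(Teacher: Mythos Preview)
Your proposal is correct and follows essentially the same route as the paper. The paper also reduces the upper bound to the recursion $f_3(2m,4,3)\le f_3(m,4,3)+2\lceil\log m\rceil\,g(m)$, proves it by splitting $[2m]=A\cup B$, reusing a $(4,3)$-coloring on each side, and coloring the cross triples by the product of (a row/column-separated copy of) the alternating-free grid coloring with the $c_B$-based coloring; the case analysis on $(|S\cap A|,|S\cap B|)$ is identical to yours, including the use of the $(3,2)$-property of $c_B$ in the $(3,1)$ case. Your three-palette formulation $(c_B,\phi)\in\mathcal{C}_{\mathrm{col}}\sqcup\mathcal{C}_{\mathrm{row}}$ is just a relabeling of the paper's $c_1\times c_2$ with $c_1$ the doubled grid coloring, so the two constructions coincide.
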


\begin{proof}
Since $K^{(3)}(n,n)$ is a subhypergraph of $K_{2n}^{(3)}$,
a $(4,3)$-coloring of $K_{2n}^{(3)}$ immediately gives
a coloring of $K^{(3)}(n,n)$ such that every copy of $K_{4}^{(3)}$ receives at least three colors.
Hence, by the correspondence between coloring functions for $K^{(3)}(n,n)$ and
coloring functions for $\Gamma_{n,n}$ explained in the introduction, it follows that
$g(n) \le f_3(2n,4,3)$.

We prove the other inequality by showing that for all $m \le n$,
\begin{align}
\label{eq:recursive}
f_3(2m, 4, 3) \le f_3(m, 4, 3) + 2\lceil \log m \rceil g(m).
\end{align}
By repeatedly applying this recursive formula, we obtain the
claimed inequality
\begin{align*}
 f_3(2n, 4, 3)  \le 2\lceil \log n \rceil^2 g(n).
\end{align*}
Thus it suffices to establish the recursive formula \eqref{eq:recursive}.
We will do this by presenting a $(4,3)$-coloring
of $K^{(3)}_{2m}$. Let $A$ and $B$ be two disjoint vertex subsets of 
$K^{(3)}_{2m}$, each of order $m$. Given a 
$(4,3)$-coloring of $K_{m}^{(3)}$ with $f_3(m,4,3)$ colors,
color the hyperedges within $A$ using this coloring
and also the hyperedges within $B$ using this coloring. Since
we started with a $(4,3)$-coloring, every 
copy of $K_4^{(3)}$ lying inside $A$ or $B$ contains
at least $3$ colors on its edges. This
leaves us with the copies which intersect both
$A$ and $B$.

Let $H$ be the bipartite hypergraph that consists of the edges which
intersect both parts $A$ and $B$.
By definition, we have an alternating-free edge coloring
of the grid graph $\Gamma_{m,m}$ using $g(m)$ colors.
We may assume, by introducing at most $g(m)$ new colors, that the 
set of colors used for the row edges and the column edges are disjoint.
This gives an edge coloring of $\Gamma_{m,m}$, where each rectangle
receives at least three colors. 
Let $c_1$ be a coloring of $H$ using at most $2g(m)$ colors,
where for an edge $\{i,j,j'\} \in H$ with $i \in A, j,j' \in B$, 
we color it with the color of the edge $\{(i,j), (i,j')\}$ in $\Gamma_{m,m}$
and for an edge $\{i,i',j\} \in H$ with $i,i' \in A, j \in B$,
we color it with the color of the edge $\{(i,j), (i',j)\}$ in $\Gamma_{m,m}$.
Let $c_2$ be a coloring of $H$ constructed based on the
coloring $c_B$ given in Section \ref{sec:preliminaries} as follows:
for an edge $\{i,j,j'\} \in H$ with $i \in A, j,j' \in B$, 
let $c_2(\{i,j,j'\}) = c_B(\{j,j'\})$ and for an edge
$\{i,i',j\} \in H$ with $i,i' \in A, j \in B$,
let $c_2(\{i,i',j\}) = c_B(\{i,i'\})$. Now color the hypergraph
$H$ using the coloring function $c_1 \times c_2$. 

Consider a copy $K$ of $K_4^{(3)}$ which intersects both parts $A$ and $B$.
If $|K \cap A| = |K \cap B| = 2$, then assume that
$K = \{i,i',j,j'\}$ for $i,i' \in A$ and $j,j' \in B$. One can see
that the set of colors used by $c_1$ on $K$ is identical to the
set of colors used on the rectangle $(i,j,i',j')$ in $\Gamma_{m,m}$ 
considered above. Thus $K$ receives at least three distinct colors.
If $|K \cap A| = 1$ and $|K \cap B| = 3$, then the three hyperedges
in $K$ which intersect $A$ use at least two colors from the coloring $c_2$,
while the unique hyperedge of $K \cap B$ is colored with a different color.
Hence $K$ contains at least three colors. Similarly, $K$ contains 
at least three colors if $|K \cap A| = 3$ and $|K \cap B| = 1$.

Since $c_1$ uses at most $2g(m)$ colors and $c_2$ uses at most $\lceil \log m \rceil$ colors, we see that
$c_1 \times c_2$ uses at most $2 \lceil \log m \rceil g(m)$ colors. 
Recall that we used at most $f_3(m,4,3)$ colors to color
the edges inside $A$ and $B$. Therefore, we have found a $(4,3)$-coloring
of $K^{(3)}_{2m}$ using at most
\[ f_3(m,4,3) + 2 \lceil \log m \rceil g(m) \]
colors, thereby establishing \eqref{eq:recursive}.
\end{proof}

\subsection{A basic bound on $F_k(r,p,q)$}

Here we prove Theorem \ref{thm:step_down} that provides a basic upper bound on the function $F_k(r,p,q)$. Recall that we are
given positive integers $r, k, p$, and $q$ all greater than $1$ and
satisfying $r \ge k$.

Let $N = r^{{F_{k-1}(r,p-1,q) \choose k-1}}$ and suppose that we are given
an edge coloring of $K_N^{(k)}$ with $r$ colors (denoted by $c$). Let $[N]$ be the vertex set of $K_N^{(k)}$.
For each integer $t$ in the range $1 \le t \le F_{k-1}(r,p-1,q)$,
we will inductively find a pair of disjoint subsets $X_t$ and $Y_t$ of $[N]$
with the following properties:

\begin{enumerate}
\item $|X_t| = t$ and $|Y_t| \ge \min\{N / r^{{t \choose k-1}}, N-t\}$,
\item for all $x \in X_t$ and $y \in Y_t$, $x < y$,
\item for all edges $e \in {X_t \cup Y_t \choose k}$ satisfying $|e \cap X_t| \ge k-1$,
the color of $e$ is determined by the first $k-1$ elements of $e$ (note that the 
first $k-1$ elements necessarily belong to $X_t$). 
\end{enumerate}

For the base cases $t=1, \ldots, k-2$, the pair of sets $X_{t} = \{1,2,\ldots,t\}$
and $Y_{t} = [N] \setminus X_t$ trivially satisfy the given properties.
Now suppose that for some $t \ge k-2$, we are given pairs $X_{t}$ and $Y_{t}$ and 
wish to construct sets $X_{t+1}$ and $Y_{t+1}$.
Since $t < F_{k-1}(r,p-1,q)$, Property 1 implies that $|Y_t| \ge 1$ and in 
particular that $Y_t$ is nonempty.
Let $x$ be the minimum element of $Y_{t}$ and let $X_{t+1} = X_{t} \cup \{x\}$.

For each element $y \in Y_{t} \setminus \{x\}$, consider the 
vector of colors of length ${|X_{t}| \choose k-2}$ whose coordinates 
are $c(e' \cup \{x, y\})$ for each $e' \in {X_{t} \choose k-2}$.
By the pigeonhole principle, there are at least 
$\frac{|Y_t| - 1}{ r^{{ |X_t| \choose k-2 }} }$ vertices which
have the same vector. Let $Y_{t+1}$ be these vertices.
This choice immediately implies Properties 2 and 3 above.
To check Property 1, note that  
\[ 
	|Y_{t+1}| 
	\ge \frac{|Y_t| - 1}{ r^{{ |X_t| \choose k-2 }} }
	\ge \frac{N / r^{{t \choose k-1}} - t - 1}{ r^{{ |X_t| \choose k-2 }} }
	= \frac{N}{ r^{{ t+1 \choose k-1 }} } - \frac{t+1}{r^{{ t \choose k-2 }}}
	> \frac{N}{ r^{{ t+1 \choose k-1 }} } - 1,
\]
where the final inequality follows from $t \ge k-2$ and $r \ge k$.
Since $N = r^{{F_{k-1}(r,p-1,q) \choose k-1}}$, $F_{k-1}(r,p-1,q) \ge t+1$ 
and $|Y_{t+1}|$ is an
integer, this implies that $|Y_{t+1}| \ge \frac{N}{ r^{{ t+1 \choose k-1 }} }$.

Let $T = F_{k-1}(r,p-1,q)$ and note that $|X_T| = F_{k-1}(r,p-1,q)$
and $|Y_T| \ge 1$. Construct an auxiliary complete $(k-1)$-uniform
hypergraph over the vertex set $X_T$ and color each edge 
with the color guaranteed by Property 3 above. This gives an edge coloring
of $K^{(k-1)}_T$ with $r$ colors and thus, by definition, we can find a
set $A$ of $p-1$ vertices using fewer than $q$ colors
on its edges in the auxiliary $(k-1)$-uniform hypergraph. 
It follows from Property 3 that for an arbitrary $y \in Y_T$, 
$A \cup \{y\}$ is a set of $p$ vertices using fewer than $q$ colors
on its edges in the original $k$-uniform hypergraph.

\subsection{A superpolynomial lower bound for $F_3(r, 5, 6)$}

In this subsection, we present a $(5,6)$-coloring of 
$K_n^{(3)}$ using $2^{O(\sqrt{\log n})}$ colors.
This shows that $f_3(n,5,6) = 2^{O(\sqrt{\log n})}$ 
and $F_3(r,5,6) = 2^{\Omega(\log^2 r)}$.

The edge coloring is given as a product $c = c_1 \times c_2 \times c_3 \times c_4$ of
four coloring functions $c_1,c_2,c_3,c_4$. The first coloring $c_1$ is a 
$(4,3)$-coloring of $K_n^{(3)}$ using $f_3(n,4,3)$ colors. Combining
Proposition \ref{prop:relation_g_f} and Theorem \ref{thm:grid_main}, we see that
$f_3(n,4,3) = 2^{O((\log n)^{2/5} (\log \log n)^{1/5})}$.

Let $n=2^d$ and write the vertices of $K_n$ as binary strings of length $d$.
To define $c_2, c_3$ and $c_4$, 
for three distinct vertices $u,v,w$, assume that the least coordinate
in which not all vertices have the same bit is the $i$-th coordinate
and let $u_i, v_i, w_i$ be the $i$-th coordinate of $u,v,w$, respectively.
Without loss of generality, we may assume that $u_i = v_i \neq w_i$, 
i.e. $(u_i, v_i, w_i) = (0,0,1)$ or $(1,1,0)$.
Define the second color $c_2$ of the triple of vertices $\{u,v,w\}$ as $i$.
Thus $c_2$ uses at most $\log n$ colors. 
Define the third color $c_3$ as the value of $w_i$, which is either 0 or 1.
Define the fourth color $c_4$ as $c_M(u, v)$, where 
$c_M$ is the graph coloring
given in Section \ref{sec:preliminaries}, which is both
a $(3,2)$ and $(4,3)$-coloring. Recall that 
$c_M$ uses at most $2^{O(\sqrt{\log n})}$ colors.

The number of colors in the coloring $c$ is  
\[ 2^{O((\log n)^{2/5} (\log \log n)^{1/5})} \cdot \log n 
\cdot 2 \cdot 2^{O(\sqrt{\log n})}  =  2^{O(\sqrt{\log n})}, \]
as desired. Now we show that each set of $5$ vertices receives at 
least $6$ colors in the coloring $c$. Let $i$ be the least 
coordinate such that the five vertices do not all agree.

\medskip

\noindent \textbf{Case 1}: One of the vertices (call it $v_1$) has one bit at coordinate $i$, while
the other four vertices (call them $v_2, v_3, v_4, v_5$) have the other bit.
The $6$ triples containing $v_1$ are different colors from the other $4$
triples.
Indeed, the triples containing $v_1$ have $c_2
= i$, while the other triples have $c_2$ greater than $i$. 
Since $c_M$ is a $(4,3)$-coloring of graphs, $c_4$ tells us that the triples containing $v_1$ have 
to use at least $3$ colors. On the other hand,
by the coloring $c_1$, the triples in the 4-set $\{v_2,v_3,v_4,v_5\}$ 
have to use at least 3 colors. Hence, at least 6 colors have to 
be used on the set of five vertices.

\medskip

\noindent \textbf{Case 2}: Two of the vertices 
(call them $v_1,v_2$) have one bit at coordinate $i$, while the other three vertices 
(call them $v_3, v_4, v_5$) have the other
bit. Let $V_0=\{v_1,v_2\}$ and $V_1=\{v_3,v_4,v_5\}$.
Let $A$ be the set of colors of triples in $\{v_1,...,v_5\}$. We partition $A$
into $A_0, A_1, A_2$ as follows. For each $j \in \{0,1,2\}$,
let $A_j$ be the set consisting of the colors of triples
containing exactly $j$ vertices from $V_0$.
It follows from the colorings $c_2$ and $c_3$ that the three color sets $A_0,
A_1, A_2$ form a partition of $A$. Indeed, the color in $A_0$ has second
coordinate $c_2$ greater than $i$, while the colors in $A_1$ and $A_2$ have
second coordinate $c_2 = i$.
Furthermore, the colors in $A_1$ have third coordinate $c_3$ distinct from the
third coordinate $c_3$ of the colors in $A_2$. Note also that $|A_0| = 1$.

\medskip

\noindent \textbf{Case 2a}: $|A_2|=3$. 

Since the coloring 
$c_M$ is a $(3,2)$-coloring of graphs, $c_4$ implies that 
the triples containing $v_1$ whose other two
vertices are in $V_1$ receive at least $2$ colors. This implies that $|A_1| \geq 2$ and, 
therefore, the number of colors used is at least $|A_0|+|A_1|+|A_2| \geq 6$.

\medskip

\noindent \textbf{Case 2b}: $|A_2|=2$.

Suppose without loss of generality that $(v_1,v_2,v_3)$ and $(v_1,v_2,v_4)$
have the same color, which is different from the color of $(v_1,v_2,v_5)$.
As each $K_4^{(3)}$ uses at least $3$ colors in coloring $c_1$, $(v_1,v_3,v_4)$
and $(v_2,v_3,v_4)$ have different colors. Note that
$c_4(v_1, v_3, v_4) = c_4(v_2, v_3, v_4) = c_M(v_3, v_4)$. Since $c_M$
is a $(3,2)$-coloring of graphs, at least one of $c_M(v_3, v_5)$ or
$c_M(v_4, v_5)$ is different from $c_M(v_3, v_4)$. Suppose, without
loss of generality, that $c_M(v_3, v_5) \neq c_M(v_3, v_4)$. Since
$c$ is defined as the product of $c_1, \ldots, c_4$, 
we see that the color of $(v_1, v_3, v_5)$ is different from both that of
$(v_1, v_3, v_4)$ and $(v_2, v_3, v_4)$. Thus $|A_1| \geq 3$.
Then the number of colors used is at least $|A_0|+|A_1|+|A_2| \geq 6$.

\medskip

\noindent \textbf{Case 2c}: $|A_2|=1$.

This implies that the three edges $(v_1,v_2,v_j)$ for $j=3,4,5$ are of the same color.
First note that as in the previous case, there are at least two
different colors among $c_M(v_3, v_4)$,  $c_M(v_3, v_5)$ and $c_M(v_4, v_5)$.
Without loss of generality, suppose that $c_M(v_3, v_4) \neq c_M(v_3, v_5)$.
Since $c$ is defined as the product of $c_1, \ldots, c_4$, 
this implies that the set $A_1' = \{c(v_1, v_3, v_4), c(v_2, v_3, v_4)\}$ is 
disjoint from the set $A_1'' = \{c(v_1, v_3, v_5), c(v_2, v_3, v_5)\}$.
Now, by considering the coloring $c_1$, since all 
three edges $(v_1,v_2,v_j)$ for $j=3,4,5$ are of the same color, 
we see that $|A_1'| = 2$ and $|A_1''| = 2$. Hence 
$|A_1| \ge |A_1'| + |A_1''| = 4$.
Then the number of colors used is at least $|A_0|+|A_1|+|A_2| \geq 6$.

\section{A chromatic number version of the Erd\H{o}s--Gy\'arf\'as problem}
\label{sec:chi_eg}

\subsection{Bounds on $F_{\chi}(r,4,3)$}

In this subsection, we prove Theorem \ref{thm:chi_4_3}. This asserts that
\[ 2^{\log^2 r/36} \le F_{\chi}(r,4,3) \le C \cdot 2^{130 \sqrt{r \log r}}. \]

In order to obtain the upper bound, we use the concept of dense pairs.
Suppose that a graph $G$ is given. For positive
reals $\varepsilon$ and $d$, a pair of disjoint vertex subsets  
$(V_1, V_2)$ is \emph{($\varepsilon, d$)-dense} if
for every pair of subsets $U_1 \subseteq V_1$ and $U_2 \subseteq V_2$
satisfying $|U_1| \ge \varepsilon |V_1|$ and 
$|U_2| \ge \varepsilon |V_2|$,  we have
\[ e(U_1, U_2) \ge d |U_1| |U_2|,
\]
where $e(U_1, U_2)$ is the number of edges of $G$ with one
endpoint in $U_1$ and the other in $U_2$.
The following result is due to Peng, R\"odl and Ruci\'nski \cite{PRR}. 
Recall that the edge density of a graph $G$ with $m$ edges and $n$ vertices is
$m/\binom{n}{2}$.

\begin{THM} \label{thm:regularity}
For all positive reals $d$ and $\varepsilon$, 
every graph on $n$ vertices of edge density at least $d$ contains an
$(\varepsilon, d/2)$-dense pair $(V_1, V_2)$ for which
\[ |V_1| = |V_2| \ge \frac{1}{8}n d^{12/\varepsilon}. \]
\end{THM}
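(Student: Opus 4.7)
The plan is to prove the lemma by an iterative ``defect refinement'' argument: start with a balanced bipartite subgraph of density comparable to $d$, and while the current pair fails to be $(\varepsilon, d/2)$-dense, exploit the witnessing defect to pass to a strictly denser sub-pair. Since density is bounded above by $1$, the iteration must terminate, and the task reduces to controlling the total size shrinkage.

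First, I would reduce to the bipartite setting. Since $G$ has at least $d\binom{n}{2}$ edges, a standard random equipartition $V = V_1^{(0)} \cup V_2^{(0)}$ with $|V_i^{(0)}| = n/2$ places each edge across the cut with probability roughly $1/2$, so a first-moment argument produces such an equipartition whose bipartite density is at least $(1-o(1))d$. This starting pair already sits comfortably above the threshold $d/2$ appearing in the definition of denseness.

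Next, I would run the defect iteration. Suppose we have reached $(V_1, V_2)$ of bipartite density $\delta \ge d/2$. If it is $(\varepsilon, d/2)$-dense, we are done. Otherwise, there exist $U_i \subseteq V_i$ with $|U_i| \ge \varepsilon |V_i|$ and $e(U_1, U_2) < (d/2)|U_1||U_2|$. Writing $W_i = V_i \setminus U_i$ and partitioning edges into the four ``corners'' yields
\[ |U_1||U_2|\,\delta_{11} + |U_1||W_2|\,\delta_{12} + |W_1||U_2|\,\delta_{21} + |W_1||W_2|\,\delta_{22} = |V_1||V_2|\,\delta, \]
where $\delta_{ij}$ denotes the density of the corresponding corner. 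Because $\delta_{11} < d/2 \le \delta$, at least one of the remaining three corners has density strictly greater than $\delta$. Replace $(V_1, V_2)$ by that corner (rebalancing sides by a small random trimming if necessary) and iterate. A weighted-average estimate from the above identity bounds the density increment by a quantity of order $\varepsilon^2(\delta - \delta_{11})$, while each side shrinks by a factor of at most $1-\varepsilon$ per step.

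Since $\delta$ is strictly increasing and bounded above by $1$, the iteration terminates, and balancing the per-step gain against the per-step shrinkage should produce the target bound $|V_i| \ge n d^{12/\varepsilon}/8$. The main obstacle will be quantitative: the naive version of the analysis above only gives a size of order $n d^{O(1/\varepsilon^2)}$, one factor of $1/\varepsilon$ weaker than the target. Closing this gap likely requires a sharper increment estimate (to keep the density moving away from $d/2$ rapidly), or a more careful choice of which corner to descend into (for instance, trimming only on one side per step so that each iteration loses a factor of $\varepsilon$ on a single coordinate rather than on both), or initializing the iteration at a density comfortably larger than $d/2$ so that the first several steps make definite multiplicative progress rather than merely ``breaking even.''
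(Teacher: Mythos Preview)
The paper does not prove this theorem at all: it is quoted as a result of Peng, R\"odl and Ruci\'nski \cite{PRR}, and the only argument given is the one-line reduction from an $n$-vertex graph of density $d$ to a balanced bipartite graph on parts of size $\lfloor n/2\rfloor$ with at least $dm^2$ edges, after which the cited bipartite statement is applied as a black box. So your proposal is attempting far more than the paper does.

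Your density-increment outline is indeed the strategy behind the Peng--R\"odl--Ruci\'nski proof, and the reduction to the bipartite setting matches the paper's reduction. However, as you yourself flag, the proposal has a genuine quantitative gap: the naive defect-increment analysis you describe --- an increment of order $\varepsilon^2(\delta-\delta_{11})$ against a shrinkage factor $1-\varepsilon$ per step --- only produces parts of size $n\,d^{O(1/\varepsilon^2)}$, one factor of $1/\varepsilon$ short of the claimed $n\,d^{12/\varepsilon}/8$. You list several plausible fixes (one-sided trimming, a sharper increment, a better initialization) but do not carry any of them out, so the proposal as written does not establish the stated bound. Closing this gap is exactly the technical content of \cite{PRR}; if you want a self-contained proof you would need to reproduce their sharper iteration, whereas for the purposes of this paper it suffices to cite the result.
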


The original theorem of Peng, R\"odl and Ruci\'nski takes a bipartite graph
with $n$ vertices in each part and $dn^2$ edges as input and outputs an 
$(\varepsilon,d/2)$-dense pair with 
parts of size at least $\frac{1}{2}n d^{12/\varepsilon}$. 
The theorem as stated above is an immediate corollary since every $n$-vertex
graph of density $d$ contains a bipartite subgraph with $m = \lfloor \frac{n}{2} \rfloor \ge \frac{n}{4}$ vertices in each part and at least $d m^2$ edges. 

\begin{proofof}{upper bound in Theorem \ref{thm:chi_4_3}}
Let $n = F_{\chi}(r,4,3) - 1$ and
suppose that a chromatic-$(4,3)$-coloring of $K_n$ using $r$ colors is given.
Take a densest color, say red, and consider the graph $\mathcal{G}$ 
induced by the red edges. This graph has density at least $\frac{1}{r}$.
By applying Theorem \ref{thm:regularity} with
$\varepsilon = \left(\frac{\ln r}{r}\right)^{1/2}$,
we obtain an $(\varepsilon, \frac{1}{2r})$-dense pair $(V_1, V_2)$ in $\mathcal{G}$
such that
\[ |V_1| = |V_2| \ge \frac{1}{8}n\left(\frac{1}{r}\right)^{12/\varepsilon}
   \ge n e^{-13 \sqrt{r \ln r}}. \]

For a color $c$ which is not red, let $\mathcal{G}_{+c}$ be the graph obtained
by adding all edges of color $c$ to the graph $\mathcal{G}$. 
Since the given coloring is a chromatic-$(4,3)$-coloring, we see that 
$\mathcal{G}_{+c}$ is $3$-colorable for all $c$. 
Consider an arbitrary proper $3$-coloring of $\mathcal{G}_{+c}$.
If there exists a color class in this proper coloring 
which intersects both $V_1$ and $V_2$ in 
at least $\varepsilon|V_1|$ vertices, then, since $(V_1, V_2)$ is an
$(\varepsilon, \frac{1}{2r})$-dense pair, there exists an edge 
between the two intersections, thereby
contradicting the fact that the $3$-coloring is proper. 

Hence, $\mathcal{G}_{+c}$ has an independent set $I_c$ of size at least 
$(1-2\varepsilon)|V_1|$ in either $V_1$ or $V_2$. 
For $i=1,2$, define $C_i$ to be the set of colors $c \in [r]$ for which 
this independent set $I_c$ is in $V_i$. 
Since $|C_1| + |C_2| \ge r-1$, 
we may assume, without loss of generality, that $|C_1| \ge \frac{r-1}{2}$. 

For each $v \in V_1$, let $d(v)$ be the number of colors 
$c \in C_1$ for which $v \in I_c$. Note that
\begin{align}
\label{eq:degreesum}
  \sum_{v \in V_1} d(v) = \sum_{c \in C_1} |I_c| 
   \ge |C_1| \cdot (1-2\varepsilon)|V_1|.
\end{align}
For each $X \subset C_1$ of size $\frac{r}{4}$, let
$I_X = \bigcap_{c \in X} I_c$. We have
\begin{align*}
  \sum_{\substack{X \subset C_1 \\ |X| = r/4}} |I_X|
 = \sum_{v \in V_1} {d(v) \choose r/4}
 \ge |V_1| \cdot {|C_1| \cdot (1-2\varepsilon) \choose r/4},
\end{align*}
where the inequality follows from \eqref{eq:degreesum} and
convexity. Since $|C_1| \ge \frac{r-1}{2}$, we have
\[ 
  \sum_{\substack{X \subset C_1 \\ |X| = r/4}} |I_X|
 \ge (1-8\varepsilon)^{r/4} |V_1| \cdot {|C_1| \choose r/4}.
\]
Thus we can find a set $X \subset C_1$ for which
$|I_X| \ge (1-8\varepsilon)^{r/4}|V_1|$. By
definition, the set $I_X$ does not contain any color from $X$ 
and hence the original coloring induces a chromatic-$(4,3)$-coloring
of a complete graph on $|I_X|$ vertices using at most $3r/4$ colors.
This gives
\[
  F_{\chi}\left(\frac{3r}{4}, 4, 3\right) - 1
  \ge |I_X| 
  \ge (1-8\varepsilon)^{r/4} |V_1|.
\]
For $\varepsilon \le 1/16$, the inequality $1-8\varepsilon \ge e^{-16\varepsilon}$ holds.
Hence, for large enough $r$, the right-hand side above is at least
\[
  e^{-4 \varepsilon r} \cdot n e^{-13\sqrt{r \ln r}} 
  =
  e^{-4\sqrt{r\ln r}} \cdot n e^{-13\sqrt{r \ln r}}   
  = (F_{\chi}(r, 4, 3) - 1) e^{-17 \sqrt{r \ln r}}.
\]
We conclude that there exists $r_0$ such that if $r \ge r_0$, then
\[
F_{\chi}(r,4,3) \le  e^{17 \sqrt{r \ln r}} F_{\chi}\left(\frac{3r}{4}, 4, 3\right).
\]

We now prove by induction that there is a constant $C$ such that $F_{\chi}(r,4,3) \leq C e^{130 \sqrt{r \ln r}}$ holds for all $r$. This clearly holds for the base cases $r<r_0$, so suppose $r \geq r_0$. Using the above inequality and the induction hypothesis, we obtain
\[
F_{\chi}(r,4,3) \le  e^{17 \sqrt{r \ln r}} F_{\chi}\left(\frac{3r}{4}, 4, 3\right) \le 
e^{17 \sqrt{r \ln r}} Ce^{130 \sqrt{(3r/4)\ln (3r/4)}} \le Ce^{130\sqrt{r\ln r}},\]
which completes the proof. 
\end{proofof}

We now turn to the proof of the lower bound.
In order to establish the lower bound, 
we show that Mubayi's coloring $c_M$ is in fact a chromatic-$(4,3)$-coloring.
This then implies that $F_{\chi}(r,4,3) \ge 2^{\log^2 r/ 36}$, as claimed.
Recall that in the coloring $c_M$, we view the vertex set of $K_n$
as a subset of $[m]^t$ for some integers $m$ and $t$ and, for two vertices
$x, y \in [m]^t$ of the form $x = (x_1, \ldots, x_t)$ and
$y = (y_1, \ldots, y_t)$, we let 
\[ c_M(x,y) = \Big(\{x_i, y_i\}, a_1, \ldots, a_t\Big), \]
where $i$ is the minimum index for which $x_i \neq y_i$ and
$a_j = \delta(x_j, y_j)$ is the Dirac delta function.

\begin{proofof}{lower bound in Theorem \ref{thm:chi_4_3}}
Consider the coloring $c_M$ on the vertex set $[m]^t$.
Suppose that two colors $c_{1}$ and $c_{2}$ are given and let
\[ c_1 = \Big(\{x_{1}, y_{1}\}, a_{1,1}, \ldots, a_{1,t} \Big)
 \quad \textrm{and} \quad 
  c_2 = \Big(\{x_{2}, y_{2}\}, a_{2,1}, \ldots, a_{2,t} \Big). \]
Suppose that $a_{1, i_1}$ is the first non-zero $a_{1,j}$ term 
and $a_{2, i_2}$ is the first non-zero $a_{2,j}$ term. In other 
words, for a pair of vertices which are colored by $c_1$, the
first coordinate in which the pair differ is the $i_1$-th coordinate
(and a similar claim holds for $c_2$).

Let $\mathcal{G}$ be the graph induced by
the edges which are colored by either $c_1$ or $c_2$.
We will prove that $\chi(\mathcal{G}) \le 3$ by presenting
a proper vertex coloring of $\mathcal{G}$ using three colors, red, blue
and green.

\medskip
\noindent \textbf{Case 1}: $i_{1}=i_{2}=i$ for some index $i$. 

First, color all the vertices whose $i$-th coordinate is equal
to $x_{1}$ in red. Second, color all the vertices
whose $i$-th coordinate is equal to $x_{2}$ in blue
(if $x_1 = x_2$, there are no vertices of color blue).
Third, color all other vertices in green.

To show that this is a proper coloring, note that
if the color between two vertices $z, w \in [m]^t$ is either
$c_1$ or $c_2$, then the $i$-th coordinate of 
$z$ and $w$ must be different.
This shows that the set of red vertices and
the set of blue vertices are both independent sets. It
remains to show that the set of green vertices is an 
independent set. To see this, note that if the color between $z$ and
$w$ is either $c_1$ or $c_2$, then the $i$-th coordinates 
$z_i$ and $w_i$ must satisfy
\[ \{z_i, w_i\} = \{x_{1}, y_{1}\} \quad \textrm{or} \quad \{x_{2}, y_{2}\}, \]
as this is the only way the first coordinate of $c_M(z, w)$
can match that of $c_1$ or $c_2$. However, all vertices
which have $i$-th coordinate $x_{1}$ or $x_{2}$ are
excluded from the set of green vertices. This shows that our 
coloring is proper. 

%In fact, a more refined analysis shows that two colors suffice for this case.

\medskip
\noindent \textbf{Case 2}: $i_{1}\neq i_{2}$.

Without loss of generality, we may assume that $i_1 < i_2$.
We will find a proper coloring by considering only
the $i_1$-th and $i_2$-th coordinates.
For $v \in [m]^t$ of the form $v = (v_1, v_2, \ldots, v_t)$, 
let
\[
  \pi_{i_1}(v) = 
  \begin{cases}
  0 & \text{if } v_{i_1} = x_1 \\
  1 & \text{if } v_{i_1} = y_1 \\
  * & \text{otherwise}\\
  \end{cases} 
  \qquad \text{and} \qquad
  \pi_{i_2}(v) = 
  \begin{cases}
  0 & \text{if } v_{i_2} = x_2 \\
  1 & \text{if } v_{i_2} = y_2 \\
  * & \text{otherwise}\\
  \end{cases}.
\]
Consider the projection map 
\[ \pi \,:\, [m]^t \rightarrow \{0, 1, *\} \times \{0, 1, *\} \]
defined by $\pi(v) = (\pi_{i_1}(v), \pi_{i_2}(v))$ and let 
$\mathcal{H} = \pi(\mathcal{G})$ be the graph on $\{0, 1, *\} \times \{0, 1, *\}$
induced by the graph $\mathcal{G}$ and the map $\pi$. More precisely,
a pair of vertices $v, w \in \{0, 1, *\} \times \{0, 1, *\}$
forms an edge if and only if there exists an edge of $\mathcal{G}$
between the two sets $\pi^{-1}(v)$ and $\pi^{-1}(w)$
(see Figure \ref{fig:proj_chi_4_3}). Note that a proper coloring
of $\mathcal{H}$ can be pulled back via $\pi^{-1}$ to give a 
proper coloring of $\mathcal{G}$. It therefore suffices to find a proper
$3$-coloring of $\mathcal{H}$.

\begin{figure}[htp]
\centering
\includegraphics[scale=0.75]{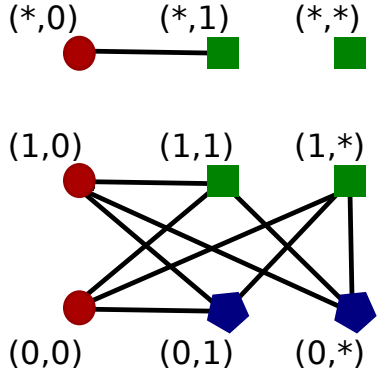}
\caption{Graph $\mathcal{H} = \pi(\mathcal{G})$ when $a_{1,i_2} = 1$.}
\label{fig:proj_chi_4_3}
\end{figure}

Consider two vertices $z,w \in [m]^t$. If $c_M(z,w) = c_2$, then
the first coordinate in which $z$ and $w$ differ is the 
$i_2$-th coordinate. This implies that $z$ and $w$ have 
identical $i_1$-th coordinate. Hence, the set of 
possible edges of the form $\{\pi(z), \pi(w)\}$ is 
$E_2 = \left\{ \{00, 01\}, \{10, 11\}, \{*0, *1\} \right\}$. 

Now suppose that $c_M(z,w) = c_1$. Then the possible edges
of the form $\{\pi(z), \pi(w)\}$ differ according to the
value of $a_{1, i_2}$.

\noindent \textbf{Case 2a}: $a_{1, i_2} = 0$. 

In this case, the $i_2$-th coordinate of $z$ and $w$ must be
the same and thus the possible edges of the form $(\pi(z), \pi(w))$
are $E_1 = \left\{ \{00, 10\}, \{01, 11\}, \{0*, 1*\} \right\}$.
One can easily check that the graph with edge set $E_1 \cup E_2$ 
is bipartite.

\noindent \textbf{Case 2b}: $a_{1, i_2} = 1$. 

In this case, the $i_2$-th coordinate of $z$ and $w$ must be
different and thus the possible edges of the form $(\pi(z), \pi(w))$
are $E_1 = \left\{ \{00, 11\}, \{00, 1*\}, \{01, 10\}, \{01, 1*\}
, \{0*, 10\}, \{0*, 11\}, \{0*, 1*\} \right\}$. A $3$-coloring
of the graph with edge set $E_1 \cup E_2$ is given by coloring the set of vertices
$\{00, 10, *0\}$ in red, $\{01, 0*\}$ in blue and $\{11, 1*, *1, **\}$ in green
(see Figure \ref{fig:proj_chi_4_3}).
\end{proofof}

\subsection{An edge partition with slowly growing chromatic number}

In this section, we will prove Theorem \ref{thm:chi_slow_grow} 
by showing that $c_M$ has the required property.

\medskip

\noindent {\bf Theorem \ref{thm:chi_slow_grow}}.
The coloring $c_M$ has the following property: for every
subset $X$ of colors with $|X| \geq 2$, the subgraph induced by the edges colored
with a color from $X$ has chromatic number at most $2^{3 \sqrt{|X| \log |X|}}$.

\medskip

\begin{proof}
Consider the coloring $c_M$ on the vertex set $[m]^t$.
For a set of colors $X$, let $\mathcal{G}_X$  be the graph 
induced by the edges colored by a color from the set $X$.
Recall that each color $c$ under this coloring is of the form
\begin{align*}
c  &= \Big(\{v_i, w_i\}, a_1, a_2, \ldots, a_t \Big).
\end{align*}
Define $\iota(c) = i$ to be the minimum index $i$ for which 
$a_i = 1$. For this index $i$, define $\eta_1(c) = v_i$ and $\eta_2(c) = w_i$,
where we break symmetry by imposing $v_i < w_i$. Let 
$a_j(c) = a_j$ for $j = 1,\ldots, t$. 

Given a set of colors $X$, construct an auxiliary graph $\mathcal{H}$ over the vertex set $X$ whose
edges are defined as follows. For two colors $c_1, c_2 \in X$,
let $i_1 = \iota(c_1)$, $i_2 = \iota(c_2)$ and assume
that $i_1 \le i_2$. Then $c_1$ and $c_2$ are adjacent
if and only if $a_{i_2}(c_1) = 1$ (it is well-defined 
since if $i_1 = i_2$, then $a_{i_2}(c_1) = a_{i_1}(c_2) = 1$).
Let $\mathcal{I}$ be the family of all independent sets in $\mathcal{H}$.
We make the following claim, whose proof will be given later.

\begin{CLAIM} \label{claim:indep_sets}
The following holds: \\
(i) For all $I \in \mathcal{I}$, the graph $\mathcal{G}_I$ is bipartite. \\
(ii) $\chi(\mathcal{G}_X) \le |\mathcal{I}|$.
\end{CLAIM}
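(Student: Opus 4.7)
For Part (i), my plan is to exhibit an explicit parity $2$-coloring of $\mathcal{G}_I$. The first observation is that within any independent set $I \in \mathcal{I}$, the values $\iota(c)$ for $c \in I$ are pairwise distinct: two colors $c_1 \ne c_2$ with $\iota(c_1) = \iota(c_2) = i$ would give $a_i(c_1) = 1$, which by the definition of $\mathcal{H}$ forces the edge $c_1 c_2$. Writing $I = \{c_1, \ldots, c_k\}$ with $i_j := \iota(c_j)$ and $i_1 < \cdots < i_k$, the key structural fact I would prove next is that the endpoints of any $c_p$-colored edge agree at coordinate $i_q$ for every $q \ne p$: agreement at indices $i_q < i_p$ follows from the definition of $\iota(c_p)$ as the first coordinate of disagreement, while agreement at indices $i_q > i_p$ is exactly the independence condition $a_{i_q}(c_p) = 0$. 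At coordinate $i_p$ itself, the two endpoints take the distinct values $\eta_1(c_p)$ and $\eta_2(c_p)$.

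With this structural fact in hand, I would define the parity map $\epsilon_I(v) := \sum_{j=1}^k \mathbf{1}[v_{i_j} = \eta_2(c_j)] \pmod{2}$ and observe that along any $c_p$-colored edge of $\mathcal{G}_I$, exactly one summand (the one indexed by $j = p$) flips, while all other summands are preserved. Hence $\epsilon_I(u) \ne \epsilon_I(v)$ for every edge $\{u,v\}$ of $\mathcal{G}_I$, giving a proper $2$-coloring and establishing (i).

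For Part (ii), the plan is to assemble the $2$-colorings from (i) into a single coloring of $\mathcal{G}_X$ indexed by members of $\mathcal{I}$. A first approximation: take any proper coloring of $\mathcal{H}$ into $\chi(\mathcal{H})$ independent sets, apply (i) to each class, and form the product coloring; this already gives $\chi(\mathcal{G}_X) \le 2^{\chi(\mathcal{H})}$. To sharpen this to the claimed $|\mathcal{I}|$, I would try to construct a direct map $v \mapsto \mathbf{I}(v) \in \mathcal{I}$ via a canonical (say, greedy-by-$\iota$) rule that adds a color $c \in X$ to $\mathbf{I}(v)$ precisely when $v_{\iota(c)} = \eta_2(c)$ and independence in $\mathcal{H}$ is preserved, and then verify that this map separates the endpoints of every edge of $\mathcal{G}_X$ by exploiting the fact that the single summand $\epsilon_{\{c\}}$ already distinguishes the endpoints of a $c$-edge.

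The main obstacle is exactly this sharpening from $2^{\chi(\mathcal{H})}$ to $|\mathcal{I}|$: the product construction is wasteful, and proving the $|\mathcal{I}|$ bound requires a selection rule for $\mathbf{I}(v)$ that both lands in $\mathcal{I}$ and retains enough information about $v$ to distinguish the two endpoints of every color-$c$ edge. Once (ii) is secured, the quantitative conclusion of Theorem~\ref{thm:chi_slow_grow} will follow from a combinatorial bound of the form $|\mathcal{I}| \le \sum_{j \le \alpha(\mathcal{H})} \binom{|X|}{j}$ together with a suitable upper bound on the independence number $\alpha(\mathcal{H})$, which are the natural quantities arising from the structure of Mubayi's coloring $c_M$.
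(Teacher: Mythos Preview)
Your treatment of part~(i) is correct and essentially coincides with the paper's: the paper packages the same argument as a projection $\pi:[m]^t\to\{0,1\}^I$ onto the hypercube, but the bipartition it pulls back is precisely your parity map $\epsilon_I$ (the paper thresholds at $\eta_1(c)$ rather than testing equality with $\eta_2(c)$, which is immaterial since along a $c$-edge the $\iota(c)$-coordinates are exactly $\{\eta_1(c),\eta_2(c)\}$).

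Part~(ii), however, has a genuine gap. You correctly flag that the product construction only yields $2^{\chi(\mathcal{H})}$ and that the sharpening to $|\mathcal{I}|$ is the real content, but the greedy assignment $v\mapsto\mathbf{I}(v)$ you sketch is not shown to work, and there is an obstruction. Suppose you process colors in increasing $\iota$ and reach the color $c$ of an edge $\{u,v\}$ with $v_{\iota(c)}=\eta_2(c)$. If some earlier $c'\in\mathbf{I}(v)$ has $a_{\iota(c)}(c')=1$, the independence constraint blocks you from adding $c$, so $\mathbf{I}(u)$ and $\mathbf{I}(v)$ remain equal through this stage. You would then need later colors $c''$ with $\iota(c'')>\iota(c)$ to separate $u$ from $v$; but at coordinate $\iota(c'')$ the vertices $u,v$ may differ (whenever $a_{\iota(c'')}(c)=1$), so the subsequent greedy decisions for $u$ and $v$ diverge in an uncontrolled way with no guarantee that $\mathbf{I}(u)\neq\mathbf{I}(v)$. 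As it stands, this is not a proof.

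The paper's route for~(ii) is different and cleaner: induction on $|X|$ via a vertex partition rather than an explicit coloring. Pick $c\in X$ with $\iota(c)$ maximal, set $i=\iota(c)$, and split the vertex set into $A=\{v:v_i=\eta_1(c)\}$ and $B=[m]^t\setminus A$. Inside $A$ every edge has $a_i=0$, so $\mathcal{G}_X[A]\subseteq\mathcal{G}_{X_c}$ where $X_c=\{c'\in X:a_i(c')=0\}$; by induction $\chi(\mathcal{G}_{X_c})$ is at most the number of independent sets of $\mathcal{H}[X_c]$, and by maximality of $\iota(c)$ these are in bijection with the independent sets of $\mathcal{H}$ that contain $c$. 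Inside $B$ no edge has color $c$ (a $c$-edge always has one endpoint in $A$), so $\mathcal{G}_X[B]\subseteq\mathcal{G}_{X\setminus\{c\}}$, and induction bounds its chromatic number by the number of independent sets not containing $c$. Summing gives $\chi(\mathcal{G}_X)\le|\mathcal{I}|$. This partition-and-induct step is the idea you are missing.

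As a minor side remark on your last paragraph: the paper does \emph{not} bound $\alpha(\mathcal{H})$ uniformly. The deduction of Theorem~\ref{thm:chi_slow_grow} from the claim is itself an induction on $|X|$ with a dichotomy: if $\mathcal{H}$ has a large independent set one peels it off using part~(i) and recurses, and only in the small-$\alpha$ case does one invoke~(ii) together with $|\mathcal{I}|\le\sum_{j<\alpha}\binom{|X|}{j}$.
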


Suppose that the claim is true. Based on this claim, we will prove by induction on $|X|$ that
$\chi(\mathcal{G}_X) \le 2^{3 \sqrt{|X| \log |X|}}$. 
For $|X|=2$, we proved in the previous subsection that $c_M$ is a chromatic-$(4,3)$-coloring, that is,
the union of any two color classes is $3$-colorable. This clearly implies the required result in this case.
Now suppose that the statement has been established for all
sets of size less than $|X|$.

Let $\alpha = \left\lceil \sqrt{\frac{|X|}{\log |X|}} \right\rceil$. If there
exists an independent set $I \in \mathcal{I}$ of
size at least $\alpha$, then, by the fact that
$\mathcal{G}_X = \mathcal{G}_I \cup \mathcal{G}_{X \setminus I}$ 
and Claim \ref{claim:indep_sets} (i),
we have
\[ \chi(\mathcal{G}_X) \le \chi(\mathcal{G}_I) \cdot \chi(\mathcal{G}_{X \setminus I})
           \le 2 \chi(\mathcal{G}_{X \setminus I}).
\]
If $|X \setminus I| \ge 2$, then the right hand side is at most $2 \cdot 2^{3\sqrt{|X\setminus I| \log |X\setminus I|}} < 2^{3\sqrt{|X| \log |X|}}$ (the inequality comes from $|I| \ge \alpha$) by the inductive hypothesis, and if $|X \setminus I| \le 1$, then since $\chi(\mathcal{G}_{X \setminus I}) \leq 2$, the right hand side is at most $4$. Hence the claimed bound holds in both cases.

On the other hand, if the
independence number is less than $\alpha$, then, 
by Claim \ref{claim:indep_sets} (ii) and the fact that $|X| \ge 2$, we have
\[ \chi(\mathcal{G}_X) \le \sum_{i=0}^{\alpha - 1} {|X| \choose i}
           \le |X|^{2\sqrt{|X|/\log |X|}}
           = 2^{2\sqrt{|X|\log |X|}}. \]
This proves the theorem up to Claim \ref{claim:indep_sets}, which we now consider.
\end{proof}

\begin{proofof}{Claim \ref{claim:indep_sets}}
(i) Suppose that $I \in \mathcal{I}$ is given. By definition, for each color $c \in I$, 
we have distinct values of $\iota(c)$. 
For each $c \in I$, consider the map $\pi_c : [m]^t \rightarrow \{0, 1\}$, 
where for $x \in [m]^t$ of the form $x = (x_1, x_2, \ldots, x_t)$, we define
\[ 
\pi_c(x) = 
	\begin{cases}
	0 & \text{if } x_{\iota(c)} \le \eta_1(c) \\
	1 & \text{if } x_{\iota(c)} > \eta_1(c).
	\end{cases}
\]
Define the map $\pi : [m]^t \rightarrow \{0, 1\}^{I}$ as
\[ \pi(x) = (\pi_c(x))_{c \in I}. \]
Consider the graph $\pi(\mathcal{G}_I)$ over the vertex set $\{0, 1\}^{I}$.
Let $c$ and $c'$ be two distinct colors in $I$.
If $\iota(c') < \iota(c)$, then
$a_{\iota(c')}(c) = 0$ since $\iota(c)$ is the minimum index $i$ 
for which $a_i(c) = 1$ and if $\iota(c') > \iota(c)$, then
$a_{\iota(c')}(c) = 0$ since $I$ is an independent set 
in the auxiliary graph $\mathcal{H}$ defined above.
Hence, if $e = \{y,z\}$ is an edge of color $c$, then the
two vectors $y$ and $z$ have identical $\iota(c')$-coordinate for all $c' \neq c$, thus implying that $\pi(y)$ and $\pi(z)$ have identical $c'$-coordinate for all $c' \neq c$.
Further note that for $x \in \{0,1\}^I$, we have $\pi_c(x) = 0$ if the $\iota(c)$-th coordinate of $x$ is $\eta_1(c)$ and 
$\pi_c(x)=1$ if the $\iota(c)$-th coordinate of $x$ is $\eta_2(c)$.
Since $\{y_{\iota(c)}, z_{\iota(c)}\} = \{\eta_1(c), \eta_2(c)\}$, we see that $\pi_c(y) \neq \pi_c(z)$.

Therefore, two vertices $v,w \in \{0, 1\}^{I}$ can be adjacent in $\pi(\mathcal{G}_I)$
if and only if $v$ and $w$ differ in exactly one coordinate, implying
that $\pi(\mathcal{G}_I)$ is a subgraph of the hypercube, which is clearly bipartite.
A bipartite coloring of this graph can be pulled back to give
a bipartite coloring of $\mathcal{G}_I$.  
%Consider the graph $\pi(\mathcal{G}_I)$ over the vertex set $\{0, 1\}^{I}$.
%For every pair of distinct colors $c, c' \in I$, we have $a_{\iota(c')}(c) = 0$ from
%the definition of $I$.
%Hence, for each edge $e$ of color $c$, we see that $\pi(e)$ is an edge 
%$\{v,w\}$ such that $v$ and $w$ differ only in the
%$c$-coordinate. Indeed, since $e$ has colour $c$, its endpoints must be equal on the $\iota(c')$-coordinate for all $c'$ in $I$. This in turn implies
%that $\pi_{c'}$ agrees on the endpoints of $e$ for all $c'$ in $I$. 
%Therefore, two vertices $v,w \in \{0, 1\}^{I}$ can be adjacent in $\pi(\mathcal{G}_I)$
%if and only if they differ in exactly one coordinate, implying
%that $\pi(\mathcal{G}_I)$ is a subgraph of the hypercube, which is clearly bipartite.
%A bipartite coloring of this graph can be pulled back to give
%a bipartite coloring of $\mathcal{G}_I$.  

\medskip

\noindent (ii) We prove this by induction on the size of the set $X$. 
The claim is trivially true for $|X| = 0$ and $1$, since $|\mathcal{I}| = 1$ and
$2$, respectively, and the graph $\mathcal{G}_X$ has chromatic number 
$1$ and $2$, respectively.

Now suppose that we are given a set $X$ and the family 
$\mathcal{I}$ of independent sets in $\mathcal{H}$ (as defined above). 
Let $c \in X$ be a color with maximum $\iota(c)$
and let $i = \iota(c)$.
Let $\mathcal{I}_c$ be the family of independent sets containing $c$ 
and $\mathcal{I}'_c$ be the family of all other independent sets. 

Let $A$ be the subset of vertices of $[m]^t$ whose $i$-th 
coordinate is $\eta_1(c)$. For two vectors $x, y \in A$, 
we have $a_i( c_M(x,y) ) = 0$, since both $x$ and $y$ 
have $i$-th coordinate $\eta_1(c)$. 
Hence, in the subgraph of $\mathcal{G}_X$ induced
on the set $A$, we only see colors $c' \in X$ 
which have $a_{i}(c') = 0$. 
Let $X_c \subseteq X$ be the set of colors $c'$ such that $a_{i}(c') = 0$.
The observation above implies that $\mathcal{G}_X[A]$ is a subgraph
of $\mathcal{G}_{X_c}$. By the inductive hypothesis, $\chi(\mathcal{G}_{X_c})$
is at most the number of independent sets of $\mathcal{H}[X_c]$. 
Moreover, by the definitions of $X_c$ and $\mathcal{I}_c$ and the
choice of $c$, the independent sets of 
$\mathcal{H}[X_c]$ are in one-to-one correspondence with
the independent sets in $\mathcal{I}_c$. 
Thus, we have
\[ 
	\chi(\mathcal{G}_X[A]) 
	\le \chi(\mathcal{G}_{X_c}) 
	\le |\mathcal{I}_c|. 
\]

Now consider the set $B = [m]^t \setminus A$. The subgraph
of $\mathcal{G}_X$ induced on $B$ does not contain any edge of color $c$
and therefore $\mathcal{G}_{X}[B]$ is a subgraph of $\mathcal{G}_{X \setminus \{c\}}$.
By the inductive hypothesis, $\chi(\mathcal{G}_{X \setminus \{c\}})$
is at most the number of independent sets of $\mathcal{H}[X \setminus \{c\}]$.
By definition, the independent sets of $\mathcal{H}[X \setminus \{c\}]$ are in 
one-to-one correspondence with independent sets in $\mathcal{I}'_c$.
Therefore, we have
\[ \chi(\mathcal{G}_X[B]) \le \chi(\mathcal{G}_{X \setminus \{c\}}) \le |\mathcal{I}'_c|. \]
Hence,
\[ \chi(\mathcal{G}_X) \le \chi(\mathcal{G}_X[A]) + \chi(\mathcal{G}_X[B]) 
   \le |\mathcal{I}_c| + |\mathcal{I}'_c| 
   = |\mathcal{I}|,
\]
and the claim follows.
\end{proofof}

Using Theorem \ref{thm:chi_slow_grow}, we can now prove Theorem \ref{thm:chi_slow_grow_less_colors}, which
we restate here for the reader's convenience.
Recall that for an edge partition $E_1 \cup \ldots \cup E_t$ of the complete graph $K_n$
and a set $I \subseteq [t]$,
we define $\mathcal{G}_I$ as the subgraph of $K_n$ with edge set $\bigcup_{i \in I} E_i$. 

\medskip
\noindent {\bf Theorem \ref{thm:chi_slow_grow_less_colors}}.
There exists a positive real $r_0$ such that the following holds
for every positive integer $r$ and positive real $\alpha \le 1$
satisfying $(\log r)^\alpha \ge r_0$.
For $n = 2^{(\log r)^{2 + \alpha}/200}$,
there exists a partition $E = E_1 \cup \dots \cup E_{\sqrt{r}}$ of
the edge set of the complete graph $K_n$ such that
\[ \chi(\mathcal{G}_I) \le 2^{3(\log r)^{\alpha/2}\sqrt{|I| \log 2 |I|}} \]
for all $I \subset [\sqrt{r}]$.
\medskip

\begin{proof}
Let $N = 2^{\log^2 r/200}$ and $t = (\log r)^{\alpha}$ (since 
$(\log r)^{\alpha} \ge r_0$ and $\alpha \le 1$, we can guarantee
that $N$ and $t$ are large enough by asking that $r_0$ be
large enough). Color the
edge set of the complete graph on the vertex set $[N]^t$ as follows.
For two vectors $v, w \in [N]^t$ of the form $v = (v_1 ,\ldots,v_t)$ 
and $w = (w_1, \ldots, w_t)$, we let
\[ c(v,w) = \Big(i, c_M(v_i, w_i)\Big), \]
where $i$ is the minimum index for which $v_i \neq w_i$. Since
$c_M$ on $K_N$ uses at most
$2^{6\sqrt{\log N}} \le \frac{\sqrt{r}}{\log r}$ colors (see the
discussion in Section \ref{sec:preliminaries}), our coloring
uses at most 
\[ t \cdot \frac{\sqrt{r}}{\log r} \le \sqrt{r} \] 
colors in total. Since $n = N^t$, this 
coloring gives an edge partition $E = E_1 \cup \dots \cup E_{s}$
of the complete graph on $n$ vertices, for some integer $s \le \sqrt{r}$.

Now suppose that a set $I \subset [s]$ is given. The set $I$ can be
partitioned into $t$ sets $I_1 \cup \dots \cup I_t$
according to the value of the first coordinate as follows:
for each $i \in [t]$, define $I_i$ as the set of indices $j \in I$ for which 
the color of the edges $E_j$ has $i$ as its first coordinate.
For each $i$, let $\pi_i \,:\, [N]^t \rightarrow [N]$ be the
projection map to the $i$-th coordinate. Then the graph
$\pi_i(\mathcal{G}_{I_i})$ becomes a subgraph of $K_N$
induced by the union of $|I_i|$ colors of $c_M$. 
Hence, by Theorem \ref{thm:chi_slow_grow}, we know that 
\[ \chi(\mathcal{G}_{I_i}) \le \chi(\pi_i(\mathcal{G}_{I_i})) \le 2^{3\sqrt{|I_i| \log 2 |I_i|}} \]
for each $i \in [t]$, where we introduce the extra $2$ in the logarithm to account for the possibility that $|I_i| = 1$. Therefore, we see that
\[ \chi(\mathcal{G}_I) \le \prod_{i \in [t]} \chi(\mathcal{G}_{I_i}) \le  2^{3\sum_{i \in [t]} \sqrt{|I_i| \log 2 |I_i|}}. \]
Since $\sqrt{x \log 2 x}$ is concave, Jensen's inequality implies that the sum in the exponent satisfies
\[\sum_{i \in [t]} \sqrt{|I_i| \log 2 |I_i|} \leq t \sqrt{(|I|/t) \log (2 |I|/t)}
\le \sqrt{t |I| \log 2 |I|} =  (\log r)^{\alpha/2} \sqrt{|I| \log 2 |I|} . \]
This implies the required result.
\end{proof}

\section{Concluding Remarks}
\label{sec:conclusion}

\subsection{The grid Ramsey problem with asymmetric colorings}

One may also consider an asymmetric version of the grid Ramsey problem,
where we color the row edges using $r$ colors but are allowed to 
use only two colors on the column edges. Let $G(r,2)$ be the minimum $n$ 
for which such a coloring is guaranteed to contain an alternating rectangle. One can easily 
see that
\[ r \le G(r,2) \le r^3 + 1. \]
The following construction improves the lower bound to 
$G(r,2) \ge \frac{1}{4}r^2$. Let $n = \frac{1}{4} r^2$ and $p$ be a prime satisfying 
$\frac{r}{2} \le p \le r$ and $n \le 2^p$ (the existence of such a prime follows from
Bertrand's postulate). 
Consider the $n \times n$ grid. 
For each $i \in [n]$, assign to the $i$-th row a sequence 
$(a_{i,1}, \ldots, a_{i, p}) \in [r]^{p}$ so that
for all distinct $i$ and $i'$ there exists at most one coordinate
$j \in [p]$ for which $a_{i,j} = a_{i',j}$ (the construction will be
given below). Given these sequences, for each $i \in [n]$ and distinct
$j, j' \in [n]$, color the edge $\{(i,j), (i,j')\}$ as follows:
examine the binary expansions of $j$ and $j'$ to identify the
first bit $t$ in which the two differ and color the edge with color $a_{i,t}$
(this is possible since $2^p \ge n$). 
%Note that for a fixed row, the set of edges with any particular color is bipartite. [REMOVED BECAUSE NOT TRUE FOR $b=0$ IN THE CONSTRUCTION BELOW]
For two distinct rows, suppose that the sequences corresponding to these rows 
coincide in the $k$-th coordinate. Then the intersection of the two rows 
is a subgraph of the graph connecting vertices
whose $k$-th bit in the binary expansion is 0 to those whose $k$-th bit
in the binary expansion is 1. Thus, the intersection of any two rows
is a bipartite graph and therefore, by the same argument as in the proof of Lemma \ref{lem:row_chromatic}, 
we obtain $G(r,2) \ge n$ 
(note that we in fact obtain a coloring of the $cr^2 \times 2^{c'r}$ grid).

It suffices to construct a collection of sequences with the property claimed above. 
For $a,b \in \mathbb{Z}_p$, consider the following sequence
with entries in $\mathbb{Z}_p$:
\[
	B_{a,b} = \Big( a, a + b, a + 2b, \ldots, a + (p-1)b \Big).
\]
For two distinct pairs $(a,b)$ and $(a',b')$,
the sequences $B_{a,b}$ and $B_{a',b'}$ can overlap in the $i$-th coordinate
if and only if $a + ib = a' + ib'$, which is 
equivalent to $(b-b')i = a' - a$. Since $(a,b) \neq (a',b')$, we see
that there exists at most one index $i$ in the range $0 \le i \le p-1$
for which $a + ib = a' + ib'$. Thus the sequences $B_{a,b}$ have the claimed property.
Note that the total number of sequences is at least $p^2 \ge n$. 
Moreover, since $p \le r$, by abusing
notation, we may assume that the sequences are in fact in $[r]^p$ and,
therefore, we can use them in the construction of our coloring.

The following question may be more approachable than the corresponding problem for $G(r)$.

\begin{QUES}
Can we improve the upper bound on $G(r, 2)$?
\end{QUES}

\subsection{The Erd\H{o}s--Gy\'arf\'as problem in hypergraphs}

As mentioned in the introduction,
for each fixed $i$ with $0 \le i \leq k$ and large enough $p$, 
\[ F_k\left(r, p, {p-i \choose k-i} + 1\right) \le r^{r^{\iddots^{r^{c_{k,p}}} }}, \]
where the number of $r$'s in the tower is $i$.
It would be interesting to establish a lower bound on $F(r, p, {p-i \choose k-i})$ 
exhibiting a different behavior.

\begin{PROB}
Let $p, k$ and $i$ be positive integers with $k \ge 3$ and $0 < i < k$.
Establish a lower bound on $F_k(r, p, {p-i \choose k-i})$ 
that is significantly larger than the upper bound on $F_k(r,p,{p-i \choose k-i} + 1)$
given above.
\end{PROB}

We have principally considered the $i=1$ case of this question. For example, the Erd\H{o}s--Gy\'arf\'as 
problem on whether $F(n,p,p-1)$ is superpolynomial for all $p \geq 3$ corresponds to the case where
$k = 2$ and $i = 1$. Theorems~\ref{thm:grid_main} and \ref{thm:F_3_5_6} represent progress on the analogous problem
with $k = 3$. The next open case, showing that $F_3(r, 6, 10)$ is superpolynomial, appears difficult.

For $i \geq 2$, it seems likely that one would have to invoke a variant of the stepping-up technique of Erd\H{o}s and
Hajnal (see, for example, \cite{GrRoSp}). In particular, we would like to know the answer to the following question.

\begin{QUES}
Is $F_3(r, p, p-2)$ larger than $2^{r^c}$ for any fixed $c$?
\end{QUES}

For $p = 4$, a positive solution to this problem follows since we know that the Ramsey number of $K_4^{(3)}$ is 
double exponential in the number of colors (see, for example, \cite{AxGyLiMu}). The general case appears to be much more difficult.

Another case of particular importance is $F_{2d-1}(r, 2d, d+1)$, since it is this function (or rather a $d$-partite variant)
which is used by Shelah in his proof of the Hales--Jewett theorem. If the growth rate of this function is a tower of 
bounded height for all $d$, then it would be possible to give a tower-type bound for Hales--Jewett numbers. However, we expect
that this is not the case.

\begin{PROB}
Show that for all $s$ there exists $d$ such that 
\[ F_{2d-1}\left(r, 2d, d+1\right) \ge 2^{2^{\iddots^{2^{r}}}}, \]
where the number of $2$'s in the tower is at least $s$.
\end{PROB}

\subsection{Studying the chromatic number version of the Erd\H{o}s--Gy\'arf\'as problem}

Since we know that both $F(r,p,p-1)$ and $F_\chi(r,4,3)$ are superpolynomial in $r$, it is natural to ask the following 
question (see also \cite{CoFoLeSu}).

\begin{QUES} \label{que:chi_poly}
Is $F_{\chi}(r,p,p-1)$ superpolynomial in $r$?
\end{QUES}

By following a similar line of argument to the lower bound for $F_\chi(r,4,3)$,
we can show that $c_M$ is also a chromatic-$(5,4)$-coloring.
Therefore, $F_{\chi}(r,5,4) = 2^{\Omega(\log^2 r)}$, answering Question~\ref{que:chi_poly} for $p =5$. Since the proof is
based on rather tedious case analysis, we will post a supplementary note rather than including the details here. It would
be interesting to determine whether the $(p, p-1)$-colorings defined in \cite{CoFoLeSu} are also chromatic-$(p,p-1)$-colorings. If so, they 
would provide a positive answer to Question~\ref{que:chi_poly}. 

%In Theorem~\ref{thm:chi_4_3}, we showed that $F_{\chi}(r, 4, 3) = 2^{O(\sqrt{r} \log r)}$. It would be interesting to improve the bounds for this theorem further. It would also be interesting to determine whether a similar improvement can be made for higher values of $p$. Since we know that $F_{\chi}(r, 2^d + 1, d+1)$ is exponential in $r$, the following question seems to be the first interesting case.

%\begin{QUES}
%Is $F_\chi(r,2^d+1, d+2) = 2^{o(r)}$ for all $d \ge 1$?
%\end{QUES}

In Theorem~\ref{thm:chi_4_3}, we showed that 
$2^{\Omega(\log^2 r)} \le F_{\chi}(r, 4, 3) \le 2^{O(\sqrt{r \log r})}$.
It would be interesting to reduce the gap between the lower and upper bounds.
Since $F_{\chi}(r,4,2) \ge 2^{r} + 1$, we see that
$F_{\chi}(r,4,2)$ is exponential in $r$, while $F_{\chi}(r,4,3)$ is subexponential in $r$.
For $p \ge 5$, the value of $q$ for which the transition
from exponential to subexponential happens is not known. However, recall that 
$F_{\chi}(r, 2^d + 1, d+1)$ is exponential in $r$ for all $d \ge 1$.
This followed from showing that in the edge coloring $c_B$
the union of every $d$ color classes induces a graph
of chromatic number $2^d$ (see Section~\ref{sec:preliminaries}). 
The following question asks whether
a similar edge coloring exists if we want the union of every
$d$ color classes to induce a graph of chromatic number at most $2^d - 1$.

\begin{QUES} \label{ques:ch_3}
Is $F_\chi(r,2^{d}, d+1) = 2^{o(r)}$ for all $d \ge 2$?
\end{QUES}

A positive answer to Question \ref{ques:ch_3} would allow us to determine,
for all $p$, the maximum value of $q$ for which $F_{\chi}(r,p,q)$
is exponential in $r$. Indeed, for $2^{d-1} < p \le 2^{d}$, we have
\[
	F_\chi(r, p, d) \ge F_\chi(r, 2^{d-1}+1, d) = 2^{\Omega(r)},
\]
while a positive answer to Question \ref{ques:ch_3} would imply
\[
	F_\chi(r, p, d+1) \le F_\chi(r, 2^{d}, d+1) = 2^{o(r)}.
\] 
Hence, given a positive answer to Question \ref{ques:ch_3}, the maximum value of $q$ for
which $F_\chi(r, p, q)$ is exponential in $r$ would be $q = \lceil \log p \rceil$.

A key component in our proof of Theorem~\ref{thm:grid_main} was Theorem~\ref{thm:chi_slow_grow}, which
says that in the coloring $c_M$, the chromatic number of the union of any $s$ color classes is not too large. 
We suspect that our estimate on the chromatic number is rather weak. It would be interesting to improve it further.
More generally, we have the following rather informal question, progress on which might allow us to improve
the bounds in Theorem~\ref{thm:grid_main}.

\begin{QUES}
Given an edge partition of the complete graph $K_n$, how slowly can the chromatic
number of the graph determined by the union of $s$ color classes grow?
\end{QUES}

Finally, let $\mathcal{F}$ be a family of graphs and define $F(r,q; \mathcal{F})$
to be the minimum integer $n$ for which every edge coloring of $K_n$ with $r$ colors
contains a subgraph $F \in \mathcal{F}$ that contains fewer than
$q$ colors. $F(r,q; \mathcal{F})$ generalizes both $F(r,p,q)$ 
and $F_\chi(r,p,q)$ since we may take $\mathcal{F}$ to be
$\{K_p\}$ for $F(r,p,q)$ and the family of all $p$-chromatic graphs
for $F_\chi(r,p,q)$. Our results suggest that $F(r,q; \mathcal{F})$ is 
closely related to the chromatic number of the graphs in $\mathcal{F}$.
The case where $\mathcal{F}$ consists of a single complete bipartite graph was studied in \cite{AxFuMu}.

%[POSSIBLE QUESTIONS]

%IS IT TRUE THAT $F(r,q; \mathcal{F}) \le F(r,p,q)$ WHENEVER $\mathcal{F}$ IS 
%A FAMILY OF $p$-CHROMATIC GRAPHS?

%DETERMINE THE VALUE $q = q_{\mathcal{F}}$ FOR WHICH THE TRANSITION OF $F(r,q;\mathcal{F})$ FROM SUPERPOLYNOMIAL TO POLYNOMIAL OCCURS.

\medskip

\noindent
{\bf Acknowledgement.} We would like to thank the anonymous referee for several helpful comments.

\end{document}